\newtheorem{theorem}{Theorem}[section]
\newtheorem{lem}[theorem]{Lemma}
\newtheorem{prop}[theorem]{Proposition}
\newtheorem{thm}[theorem]{Theorem}
\theoremstyle{remark}
\newtheorem{rem}{Remark}
\let\reftagform@=\tagform@
\def\tagform@#1{\maketag@@@{(\ignorespaces\textcolor{purple}{#1}\unskip\@@italiccorr)}}
\renewcommand{\eqref}[1]{\textup{\reftagform@{\ref{#1}}}}
\newcommand{\E}{\ensuremath{\mathbb{E}}}
\newcommand{\Pro}{\ensuremath{\mathbb{P}}}
\newcommand{\Tr}{\textup{Tr}}
\newcommand{\slvar}{\stackrel{\textup{sl.}}{\sim}}
\newcommand{\mathdef}{\mathrel{\mathop:}=}
\newcommand{\br}{\vspace{1mm}\\}
\newcommand{\Br}{\vspace{2mm}\\}
\def\Var{\mathop{\rm Var}\nolimits}
\def\<{\langle}
\def\>{\rangle}
\title[Sparse, heavy tailed random matrices]{Extreme eigenvalues of sparse, heavy tailed random matrices}
\begin{document}

\author{Antonio Auffinger}
\address{Antonio Auffinger - Northwestern University, Department of Mathematics, 2033 Sheridan Road, Evanston, IL 60208, USA.}

\email{tuca@northwestern.edu}
\thanks{The research of A. A. is supported by NSF grant DMS-1517864.}
\author{Si Tang}
\address{Si Tang - University of Chicago, Department of Statistics, 5734 S. University Avenue, Chicago, IL 60637, USA.}
 \email{sitang@galton.uchicago.edu}
\begin{abstract} We study the statistics of the largest eigenvalues of $p \times p$ sample covariance matrices $\Sigma_{p,n} = M_{p,n}M_{p,n}^{*}$ when the entries of the $p \times n$ matrix $M_{p,n}$ are sparse and have a distribution with tail $t^{-\alpha}$, $\alpha>0$. On average the number of nonzero entries of $M_{p,n}$ is of order $n^{\mu+1}$, $0 \leq \mu \leq 1$. We prove that in the large $n$ limit, the largest eigenvalues are Poissonian if $\alpha<2(1+\mu^{{-1}})$ and converge to a constant in the case $\alpha>2(1+\mu^{{-1}})$. We also extend the results of \cite{BenaychGeorges:2014ki} in the Hermitian case, removing restrictions on the number of nonzero entries of the matrix. 
\end{abstract}
\maketitle

\footnotetext{MSC2000: Primary 60K35, 82B43.}

\section{Introduction}

We study the statistics of the largest eigenvalues of sample covariance matrices when the entries are heavy tailed and sparse. Let $x$ be a complex-valued random variable. We say $x$ has a heavy tailed distribution with parameter $\alpha$ if the (two-sided) tail probability
\begin{equation*}
\label{eqn:distalpha}
G_{\alpha}(t) \mathdef \Pro(|x| > t) = L(t) t^{-\alpha},\quad t>0
\end{equation*}
where $\alpha >0$ and $L$ is a slowly varying function, i.e.,
\begin{equation*}
 \lim_{t\to \infty} \frac{L(st)}{L(t)} =1, \quad \forall s>0.
\end{equation*}

For each $n \ge 1$, let $y = y(n)$ be a Bernoulli random variable, independent of $x$, with $\Pro(y=1) =n^{\mu-1} = 1-\Pro(y=0)$, where $0 \leq \mu \le 1$ is a constant. The ensemble of random sample covariance matrices that we study here is defined as follows. 
For each $n\ge 1$, let $p=p(n)\in \mathbb Z_{+}$ be a function of $n$ such that $$p/n \to \rho, \quad 0 < \rho \le 1,$$ as $n\to \infty$.   Let $A_{p, n}=[a_{ij}]_{i,j=1}^{p,n}$ and  $B_{p, n}=[b_{ij}]_{i,j=1}^{p,n}$ be $p\times n$ random matrices whose entries are i.i.d. copies of $x$ and $y$, respectively. Form the $p\times n$ matrix $M_{p,n}=A_{p,n}\cdot B_{p,n}=[m_{ij}]_{i,j=1}^{p,n}$ by setting $m_{ij}=a_{ij}b_{ij}$. Then 
$$\Sigma_{p,n} \mathdef  M_{p,n}M_{p,n}^{*}$$
 is the $p\times p$ sparse, heavy tailed random sample covariance matrix with parameters $\alpha$ and $\mu$. Note that $\Sigma_{p,n}$ is positive semi-definite so all its eigenvalues are non-negative.

The extreme eigenvalues of $\Sigma_{p,n}$ are the main subject of this paper. We will see that, depending on the tail exponent $\alpha$ and the sparsity exponent $\mu$, when properly rescaled, the top eigenvalues will either converge to a Poisson point process or to the right edge of the Marchenko-Pastur law.

To put our theorems in context, we briefly review past results. The study of extreme eigenvalues of heavy tailed random matrices started with the work of Soshnikov. In \cite{Soshnikov:2004uc}, he proved that if $0<\alpha<2$, the asymptotic behavior of the top eigenvalues of a heavy tailed Hermitian matrix is determined by the behavior of the largest entries of the matrix, i.e.,  the point process of the largest eigenvalues (properly normalized) converges to a Poisson point process, as in the usual extreme value theory for i.i.d. random variables. 
This result was extended to sample covariance matrices and for all values of $\alpha \in (0,4)$  in the work of Auffinger, Ben Arous and P\'{e}ch\'{e} \cite{Auffinger:2009vs}. The upper bound on the tail exponent $\alpha$ is optimal as for entries with finite fourth moment, the largest eigenvalues converge to the right edge of the bulk distribution and have Tracy-Widom fluctuations \cite{Bai1988166,Bai:1988je,lee2014,Yin:1988kp}. Eigenvector localization and delocalization were studied in \cite{BenaychGeorges:2014gg}. In the physics literature, many of these results were predicted in the seminal paper of Bouchaud and Cizeau \cite{CB94}.

The largest eigenvalues of sparse Hermitian random matrices with bounded moments were investigated by Benaych-Georges and P\'ech\'e \cite{ECP3027} under the assumptions of at least $\omega(\log n)$ nonzero entries in each row. They extended the results of \cite{Khor08,Sodin}, establishing the convergence of the largest eigenvalue to the edge and  also obtained results on  localization/delocalization of eigenvectors. For bulk statistics in the sparse setting, readers are invited to see Erd{\H o}s, Knowles, Yau, and Yin \cite{Erdos:2012cx} and the references therein.

In  \cite{BenaychGeorges:2014ki}, Benaych-Georges and P\'ech\'e considered a class of $n \times n$ Hermitian, heavy tailed, sparse matrices. In their work, the authors looked at matrices, where in $n-o(n)$ rows, the number of nonzero entries was asymptotically equal to $n^{\mu}$ for $\mu \in (0,1]$. For the remaining $o(n)$ rows, the number of nonzero entries was no more than $n^{\mu}$. This assumption is well-suited to treat the case of heavy-tailed band matrices.  In the last section, we will extend the work of \cite{BenaychGeorges:2014ki} by removing all restrictions on the number of nonzero entries in each row, allowing, for instance, the sparsity to come from the adjacency matrix of an Erd\H os-R\' enyi random graph. 

Although we extend the results of \cite{BenaychGeorges:2014ki}, the main objective of this paper is to treat the spectrum of sample covariance matrices $\Sigma_{p,n}$ constructed from a sparse matrix $M_{p,n}$.  These matrices naturally appear in applications such as models of complex networks with two species of nodes \cite{Nagao13} and also in information theory as channel capacity of wideband CDMA schemes \cite{Yoshida}. For more applications and predictions one can look at \cite{540625382,1751-8121-40-19-003,540625391,SemerC02} and the references therein. In the mathematical literature, as far as we know, there are no results  dealing with the top eigenvalues of sparse sample covariance matrices. The main purpose of this paper is to provide such results.

Throughout the paper, we will use $\lambda_{l}(A)$ to denote the $l$-th largest eigenvalue of a Hermitian matrix $A$, $\mathbf v_{l}(A)$ the corresponding eigenvector. For a matrix $A = [a_{ij}]$, either Hermitian or rectangular, $a_{i_{l}j_{l}}$ denotes its the $l$-th largest entry in absolute value in the upper-triangular part (if  $A$ is Hermitian) or of all entries (if $A$ is rectangular), and $\theta_{l}(A)$ be its argument, i.e., $\theta_{l}(A) = \arg(a_{i_{l}j_{l}})$. Let $\mathbf e_{1}, \ldots, \mathbf e_{n} $ represent the canonical basis vectors for $\mathbb R^{n}$. The notation $f(x)\slvar g(x)$ means that there exists some slowly varying function $l(x)$ such that $f(x)=l(x)g(x)$.  A sequence of events $(E_n)_{n\geq 1}$ is said to occur {\em with exponentially high probability} (w.e.h.p.) if there exists $C, \theta > 0$ and $n_0 \in \mathbb N$ such that for $n\geq n_0$, $\Pro(
E_{n}) \ge 1-e^{-Cn^{\theta}}$. 
We will also use the following matrix norms:
$$||A||_{\infty} \mathdef \max_{i} \sum_{j} |a_{ij}|, \quad ||A||_{1} \mathdef \max_{j} \sum_{i} |a_{ij}|, \quad ||A|| \mathdef \max_{\mathbf v: ||\mathbf v||_{2}=1} ||A\mathbf v||_{2}.$$

The rest of the sections will be organized as follows. In Section \ref{sec2}, we state our main results. In Section \ref{sec3}, a few key lemmas will be listed and proved. Section \ref{sec4} will be devoted to the proof of the main theorems while in Section \ref{sec:extension} we present the Hermitian case and other extensions.
\section{Main results}\label{sec2}

Our main results are about the eigenvalues of the sample covariance matrix $\Sigma_{p,n}=M_{p,n}M_{p,n}^{*}$. In our setting, there are approximately $p\cdot n\cdot n^{\mu-1}\approx\rho n^{1+\mu}$ nonzero entries in $M_{p,n}$. We know from extreme value theory \cite[Section 1.2]{Resnick:2007uq} that the scaling factor for the largest entries of the matrix $M_{p,n}$ should be
\begin{align} 
\label{eqn:cnp}
c_{np}\mathdef \inf\left\{t: G_{\alpha}(t) \le \frac{1}{pn^{\mu}}\right\}.
\end{align}
Moreover, $c_{np}\slvar n^{(1+\mu )/\alpha}$ and
\begin{align*}
\lim_{n\to \infty} \Pro\left(\max_{i,j}\ c^{-1}_{np}|m_{ij}| \le t\right) = e^{-t^{-\alpha}}.
\end{align*}

Our first theorem says that when $0 < \alpha < 2(1+\mu^{-1})$, the extreme eigenvalues of $\Sigma_{p,n}$ behave like the square of the top entries of $M_{p,n}$.
\begin{theorem}\label{thm:mainresult3} 
Suppose $0 < \alpha < 2(1+\mu^{-1})$. For $(1+\mu^{-1}) \le \alpha < 2(1+\mu^{-1})$, we also assume that $x$ is centered. Then as $n\to \infty$, we have for each $l\geq 1$
\[
\frac{\lambda_{l}(\Sigma_{p,n})}{|m_{i_{l}j_{l}}|^{2}} \stackrel{P}{\rightarrow} 1.
\]
The eigenvectors are localized: for each $l \ge 1$,
$\displaystyle
\left \Vert \mathbf v_{l}(\Sigma_{p,n}) -\mathbf e_{i_{l}}\right \Vert_{2} \stackrel{P}{\longrightarrow} 0.
$
\end{theorem}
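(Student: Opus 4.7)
The plan is to split $M_{p,n}=T+R$, where $T$ keeps only the entries with absolute value exceeding $\epsilon c_{np}$ (for some small fixed $\epsilon>0$) and $R$ is the complementary matrix. Expanding $\Sigma_{p,n}=MM^{*}$ yields
\[
\Sigma_{p,n}=TT^{*}+TR^{*}+RT^{*}+RR^{*},
\]
so the strategy is to show that $TT^{*}$ alone captures the top eigenvalues and eigenvectors, while the three remaining terms contribute a perturbation of operator norm $o(c_{np}^{2})$.

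For the spike matrix $T$, I would first argue that, for any fixed $L\geq 1$, the top $L$ entries of $M_{p,n}$ sit in $L$ distinct rows and $L$ distinct columns with probability tending to one. This follows from a standard birthday-type argument: since the $\approx\rho n^{1+\mu}$ nonzero positions of $M_{p,n}$ are (essentially) uniformly distributed on the $p\times n$ grid, the chance that two of the top $L$ entries share a row or column is $O(n^{-\min(\mu,1)})$. On this event $TT^{*}$ is, up to permutation, diagonal with nonzero eigenvalues $|m_{i_{l}j_{l}}|^{2}$ and eigenvectors $\mathbf{e}_{i_{l}}$ for $1\leq l\leq L$. Standard extreme-value considerations show that the total number of entries exceeding $\epsilon c_{np}$ is tight, so one can push $L\to\infty$ at the end.

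The core technical step is to bound $\|R\|$. After centering (when $\alpha\geq 1+\mu^{-1}$, which is exactly the threshold at which $\E[x]\neq 0$ would contribute an uncentered rank-one bias of squared norm $\asymp n^{2\mu}\E[x]^{2}>c_{np}^{2}$), the entries of $R$ are i.i.d., mean zero, with variance $\sigma^{2}=n^{\mu-1}\,\E[|x|^{2}\mathbf{1}\{|x|\leq \epsilon c_{np}\}]$. A Bai-Yin / Latala-type estimate gives $\|R\|^{2}=O(\sigma^{2}n)$ with high probability. A Karamata computation using $c_{np}\slvar n^{(1+\mu)/\alpha}$ and $G_{\alpha}(c_{np})\asymp (pn^{\mu})^{-1}$ yields $\sigma^{2}=O(\epsilon^{2-\alpha}c_{np}^{2}/n^{2})$ for $\alpha<2$ and $\sigma^{2}=O(n^{\mu-1})$ for $\alpha\geq 2$. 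Hence $\|R\|^{2}=o(c_{np}^{2})$ trivially for $\alpha<2$, while for $\alpha\geq 2$ the condition $\sigma^{2}n=O(n^{\mu})=o(n^{2(1+\mu)/\alpha})$ reduces to $\alpha<2(1+\mu^{-1})$, exactly the range of the theorem.

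Combining, the cross terms satisfy $\|TR^{*}\|,\|RT^{*}\|\leq \|T\|\,\|R\|=O(c_{np})\cdot o(c_{np})=o(c_{np}^{2})$, and $\|RR^{*}\|\leq \|R\|^{2}=o(c_{np}^{2})$. Weyl's inequality then gives $|\lambda_{l}(\Sigma_{p,n})-|m_{i_{l}j_{l}}|^{2}|=o(c_{np}^{2})$, and dividing by $|m_{i_{l}j_{l}}|^{2}\asymp c_{np}^{2}$ yields the stated convergence in probability. The eigenvector localization follows from the Davis-Kahan theorem: the eigenvalues of $TT^{*}$ have asymptotic spectral gaps of order $c_{np}^{2}$ (the limiting Poisson points are almost surely simple), and a perturbation of operator norm $o(c_{np}^{2})$ moves the corresponding eigenvectors by $o(1)$ from $\mathbf{e}_{i_{l}}$. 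The main obstacle is obtaining the operator-norm bound on $R$ uniformly near the critical exponent $\alpha=2(1+\mu^{-1})$; this likely requires a moment-method estimate of $\Tr((RR^{*})^{k})$ tracking the sparsity factor $n^{\mu-1}$ along each edge of the associated lattice path, rather than a crude Bai-Yin style bound, and a careful choice of the truncation level $\epsilon c_{np}$.
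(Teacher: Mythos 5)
Your architecture (spikes $T$ plus remainder $R$, Weyl, Davis--Kahan, Poisson spacings) is the right shape, and the treatment of $TT^{*}$ and of the centering threshold $\alpha\ge 1+\mu^{-1}$ is fine. The genuine gap is the operator-norm bound on $R$. The claim $\|R\|^{2}=O(\sigma^{2}n)=o(c_{np}^{2})$ is false: $R$ retains all entries up to $\epsilon c_{np}$, and the number of entries of $M_{p,n}$ in $(\epsilon c_{np}/2,\,\epsilon c_{np}]$ is asymptotically Poisson with strictly positive mean, so with probability bounded away from zero $\|R\|\ge \max_{ij}|r_{ij}|\ge \epsilon c_{np}/2$, hence $\|RR^{*}\|\gtrsim \epsilon^{2}c_{np}^{2}$, which is comparable to, not negligible against, $c_{np}^{2}$. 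Bai--Yin requires a fourth-moment condition that fails here, and Latala's bound already returns $\E\|R\|\gtrsim(\sum\E|r_{ij}|^{4})^{1/4}\slvar c_{np}$. Your proposed repair --- a trace estimate on $\Tr((RR^{*})^{k})$ --- also cannot work at truncation level $\propto c_{np}$: with $k\to\infty$ polynomially (needed to beat the entropy of paths) the high truncated moments $\E|r_{ij}|^{2k}\le(\epsilon c_{np})^{2k-2}\sigma^{2}$ force a bound of order at least $(\epsilon c_{np})^{2}$ times powers of $k$, so the moment method only certifies $\|RR^{*}\|\lesssim n^{2\gamma'}$ for $\gamma'$ strictly \emph{larger} than the truncation exponent. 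One could try to salvage the scheme by a diagonal argument in $\epsilon$, but then one still has to prove $\|R\|\le C\epsilon^{c}c_{np}$, and that is precisely where the real work lies.

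The paper resolves this with a two-tier decomposition that you are missing. It truncates at the much lower level $n^{\gamma}$ with $\gamma<\frac{\mu+1}{\alpha}$ (and $\gamma'\ge\mu/2$, which is where the hypothesis $\alpha<2(1+\mu^{-1})$, i.e.\ $\mu/2<\frac{\mu+1}{\alpha}$, enters), controls the truncated bulk by the Yin--Bai--Krishnaiah trace combinatorics (Theorem \ref{thm:traceboundcov}), and controls the intermediate entries $n^{\gamma}<|m_{ij}|\le c_{np}^{\kappa}$, $\kappa=\frac{1+2\mu}{2+2\mu}+\delta<1$, not by an operator-norm moment bound but by $\|M'\|_{\infty}\|M'\|_{1}$: Lemma \ref{lem:keylemma1} guarantees that no row or column contains two entries above $c_{np}^{\kappa}$, and Lemma \ref{lem:keylemma2} bounds the row and column sums of the smaller entries, so that each row sum is $|m_{i_1j_1}|(1+o(1))$. (For $\alpha<1+\mu^{-1}$ no trace bound is needed at all; the row-sum mechanism alone suffices.) Without some version of the one-large-entry-per-row observation and the row-sum lemmas, the entries between $n^{\gamma}$ and $\epsilon c_{np}$ cannot be absorbed into a negligible perturbation, so as written the proof does not close.
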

It follows from Theorem \ref{thm:mainresult3} and a routine computation (see \cite[Page 593]{Auffinger:2009vs}) that the random point processes
\begin{equation} 
\label{eqn:randppcov1}
\mathcal Q_{n} =\sum_{i=1}^{p}\sum_{j=1}^{n} \delta_{c_{np}^{-2}|m_{ij}|^{2}}, \quad\hat{\mathcal Q}_{n} =\sum_{i=1}^{p} \delta_{c_{np}^{-2}\lambda_{i}(\Sigma_{p,n})}
\end{equation}
converge in distribution to the same Poisson point process  on $(0, + \infty)$ with intensity~$\frac{\alpha}{2x^{1+\alpha/2}}$. 

\begin{rem}
As mentioned in the introduction, the conclusion of the Theorem above holds in the non-sparse case if and only if $0<\alpha<4$. Roughly speaking, when we introduce sparseness, we increase the localization of the eigenvectors towards the position of the largest entry and the Poissonian limit holds with lighter tails ($2(1 + \mu^{-1})>4$). Note that, when $\mu=0$, any polynomial tail is allowed.  This was also observed in \cite{BenaychGeorges:2014ki}, see Section \ref{sec:extension} below for more details. One should also note that although $M_{p,n}$ is sparse, $\Sigma_{p,n}$ is, in general, not. 
\end{rem}

In the second regime, $\alpha > 2(1 + \mu^{-1})$, the Poissonian limit no longer holds. The largest eigenvalues, when normalized by $n^{\mu}$, converge to the edge of the bulk distribution. We also need the following definition. For $L\in \mathbb N$ and $\eta \in (0,1]$, we say that a unit vector $\mathbf v=(v_{1}, \ldots, v_{n})\in \mathbb C^n$ is $(L,\eta)$-localized if there exists a set $S \subseteq \{1,\ldots n \}$ with cardinality $L$ such that $$\sum_{j\in S} |v_j|^2 > 1-\eta.$$ 
\begin{theorem} \label{thm:mainresult4} 
Suppose $\alpha > 2(1+\mu^{-1})$ and $x$ has mean zero and variance one. 
Then for each $l\ge 1$, as $n\to \infty$, we have
\begin{equation*}\label{eq:supercriticalSC}
\frac{\lambda_{l}(\Sigma_{p,n})}{n^{\mu}} \stackrel{P}{\longrightarrow} (1 + \sqrt{\rho})^2.
\end{equation*}
The eigenvectors of $\Sigma_{p,n}$ are delocalized, namely, there exists $\beta, \eta_0>0$ such that for each $l\geq 1$, $0<\eta < \eta_0$  
we have
\begin{equation}\label{eq:delocthmcov}
\mathbb P \left( \mathbf{v}_l(\Sigma_{p,n}) \text{ is } (\lfloor p^{\beta}\rfloor,\eta)\text{-localized} \right) \to 0
\end{equation}
as $n\to \infty$.
\begin{rem} In the regime of both Theorems  \ref{thm:mainresult4} and  \ref{thm:mainresult2} below, when $\mu = 0$, the critical case of a Erd\H os-R\' enyi adjacency matrix, we are forced to take $\alpha = \infty$, which is not allowed. In this case, it is still an open question to obtain explicit formulas for the limiting spectral distribution. To see more in this direction, the reader is invited  to check \cite{Erdos:2012cx} and the references therein.
\end{rem}

\end{theorem}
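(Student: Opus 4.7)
My plan is to prove the eigenvalue convergence by a truncation plus a Bai--Yin moment method, and then to deduce delocalization from part (a) by comparing $\lambda_l(\Sigma_{p,n})$ to the top singular value of a row submatrix of $M_{p,n}$.

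\textbf{Eigenvalue convergence.} Because $\alpha>2(1+\mu^{-1})$, one can pick a truncation level $T_n=n^{\tau}$ with $(1+\mu)/\alpha<\tau<\mu/2$. Set $\hat m_{ij}=m_{ij}\indi(|m_{ij}|\le T_n)$. The definition of $c_{np}$ yields $\Pro(\max_{i,j}|m_{ij}|>T_n)\to 0$, so with probability $1-o(1)$ we have $M_{p,n}=\hat M_{p,n}$. Since $x$ is centered and $\alpha>1$, the recentering $\tilde m_{ij}=\hat m_{ij}-\E \hat m_{ij}$ produces a rank-one correction of operator norm $O(n^{\mu}T_n^{1-\alpha})=o(n^{\mu/2})$ in $M_{p,n}$, which is negligible in $\Sigma_{p,n}$. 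The centered truncated entries have variance $\sigma_n^{2}=n^{\mu-1}(1+o(1))$, so the predicted edge is $n\sigma_n^{2}(1+\sqrt\rho)^{2}\sim n^{\mu}(1+\sqrt\rho)^{2}$. The lower bound $\liminf \lambda_l(\Sigma_{p,n})/n^{\mu}\ge(1+\sqrt\rho)^{2}$ for each fixed $l$ follows from weak convergence of the empirical spectral distribution of $\Sigma_{p,n}/n^{\mu}$ to the Marchenko--Pastur law of parameter $\rho$, which holds under our finite second-moment hypothesis. For the upper bound $\limsup\lambda_1(\Sigma_{p,n})/n^{\mu}\le(1+\sqrt\rho)^{2}$, I would compute $\E\,\Tr\bigl(\tilde M_{p,n}\tilde M_{p,n}^{*}/n^{\mu}\bigr)^{k}$ for $k=k_n\to\infty$ slowly via the path-counting expansion of Bai--Yin \cite{Bai:1988je,Yin:1988kp,lee2014}, bound it by $p\,((1+\sqrt\rho)^{2})^{k}(1+o(1))$, and apply Markov.

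\textbf{The main obstacle.} When $\alpha<4$ the truncated entries do not have fourth moment of the natural size $\sigma_n^{4}$, so classical Bai--Yin cannot be cited as a black box. However, the choice of $T_n$ is designed precisely so that $T_n^{2}/(n\sigma_n^{2})=T_n^{2}/n^{\mu}\to 0$: in the path expansion, an edge of multiplicity $r\ge 2$ contributes at most $T_n^{2(r-2)}\sigma_n^{2r}$, and the factor $(T_n^{2}/n\sigma_n^{2})^{r-2}$ is $o(1)$ for every $r\ge 3$, which is exactly what the Bai--Yin argument needs to discard non-tree contributions. This is the point at which the hypothesis $\alpha>2(1+\mu^{-1})$ is used.

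\textbf{Delocalization.} Suppose for contradiction that for some $l\ge 1$ the eigenvector $\mathbf v_l=\mathbf v_l(\Sigma_{p,n})$ is $(L,\eta)$-localized with $L=\lfloor p^{\beta}\rfloor$ on a set $S\subseteq\{1,\ldots,p\}$. Write $\mathbf v_l=\mathbf v_S+\mathbf v_{S^c}$ with $\|\mathbf v_{S^c}\|_2<\sqrt\eta$ and let $M_S$ denote the $L\times n$ row submatrix of $M_{p,n}$ indexed by $S$. Since $\mathbf v_S$ is supported on $S$, $\|M_{p,n}^{*}\mathbf v_S\|_2\le\sigma_1(M_S)$, and therefore
\[
\sqrt{\lambda_l(\Sigma_{p,n})}=\|M_{p,n}^{*}\mathbf v_l\|_{2}\le\sigma_1(M_S)+\sqrt{\lambda_1(\Sigma_{p,n})\,\eta}.
\]
By part (a), $\lambda_l(\Sigma_{p,n})$ and $\lambda_1(\Sigma_{p,n})$ are both $(1+\sqrt\rho)^{2}n^{\mu}(1+o(1))$ in probability, so $(1-\sqrt\eta)(1+\sqrt\rho)\sqrt{n^{\mu}}(1+o(1))\le\sigma_1(M_S)$. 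It therefore suffices to establish
\[
\sup_{S:|S|=L}\sigma_1(M_S)^{2}\le(1+\delta)n^{\mu}\quad\text{with probability }1-o(1),
\]
for any $\delta>0$; choosing $\delta,\eta$ so that $(1+\delta)<(1-\sqrt\eta)^{2}(1+\sqrt\rho)^{2}$ then produces the contradiction. The uniform bound is obtained by applying the moment method of part (a) to the $L\times n$ matrix $M_S$, whose expected edge $n\sigma_n^{2}(1+\sqrt{L/n})^{2}\sim n^{\mu}$ lies strictly below $(1+\delta)n^{\mu}$; taking moment exponent $k\asymp L\log p$ in Markov gives a per-$S$ tail of order $e^{-CL\log p}$, which absorbs the union bound $\binom{p}{L}\le e^{L\log p}$ provided $\beta$ is sufficiently small.
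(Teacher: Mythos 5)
Your proposal follows essentially the same route as the paper: the lower bound comes from Marchenko--Pastur convergence of the empirical spectral distribution, the upper bound from a truncated Yin--Bai--Krishnaiah moment computation (this is the paper's Theorem \ref{thm:traceboundcov}, where the condition $T_n^2=o(n^\mu)$ you isolate is exactly what replaces the missing fourth moment), and delocalization from the fact that a $(\lfloor p^\beta\rfloor,\eta)$-localized eigenvector forces some $\lfloor p^\beta\rfloor$-row submatrix to have an anomalously large top singular value, which is excluded by a uniform moment bound plus a union bound over subsets. Two of your choices differ in detail. First, you truncate at $n^\tau$ with $\tau>(1+\mu)/\alpha$, so that $M_{p,n}=\hat M_{p,n}$ with probability $1-o(1)$ and there is no remainder matrix to control; the paper truncates lower, at $n^{\gamma}$ with $\gamma\in(\tfrac{\mu}{2(\alpha-1)},\tfrac{\mu}{2})$, and must then bound $\|M'_{p,n}\|_1\|M'_{p,n}\|_\infty=o(n^{\mu})$ via its row-sum lemmas. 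Your version is a legitimate simplification in this regime (and your $\tau$ automatically exceeds $\mu/(2(\alpha-1))$, so the recentering term is indeed $o(n^{\mu/2})$). Second, your inequality $\sqrt{\lambda_l}\le\sigma_1(M_S)+\sqrt{\eta\,\lambda_1}$ is the singular-value form of the paper's Lemma \ref{lem:linearalgebraeasylemma}, and is equivalent to it since $\sigma_1(M_S)^2$ is precisely the norm of the principal submatrix of $\Sigma_{p,n}$ indexed by $S$. One bookkeeping caveat: with $k\asymp L\log p$ the per-subset tail from Markov is $e^{-cL\log p}$ with $c=\log\frac{1+\delta}{1+\delta/2}$, which need not exceed $1$, so it does not automatically absorb $\binom{p}{L}\le e^{L\log p}$; you need $k$ to be a sufficiently large multiple of $L\log p$, or simply $k=\lfloor n^{\gamma''}\rfloor$ as in the paper, which the trace bound permits provided $\beta<\gamma''$ --- the fix is immediate. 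Finally, like the paper, you assert rather than prove the trace bound for the thin submatrix ($L/n\to 0$); that step does require reworking the count of row versus column innovations, which the paper also only sketches with a reference to P\'ech\'e.
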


\begin{rem} The form of delocalization in \eqref{eq:delocthmcov} is relatively simple compared to the results obtained in \cite{BenaychGeorges:2014gg, Bourgade:2013wd} when considering non-heavy tailed distributions for Wigner matrices. In words, \eqref{eq:delocthmcov} says that if $\alpha > 2(1+\mu^{-1})$ eigenvectors must have nonzero coordinates spread over at least $p^\beta$ coordinates, different from the case $\alpha < 2(1+\mu^{-1})$ where the number of nonzero coordinates does not diverge with $n$. 
\end{rem}

\begin{rem}
One can also take  $\Pro (y=1)=f(n)n^{\mu-1}$ for a slowly varying function $f \ne 0$. The results of the above theorems remain true, with an additional slowly varying function in the normalization of the entries. 
\end{rem}

\begin{rem}\label{rem:noname} Our results also hold if we deterministically specify the positions of the nonzero entries in $M_{p,n}$ or if the number of nonzero entries in each row is nonrandom and $\slvar n^{\mu}$. See Remark \ref{bandremark} in Section \ref{sec3}.
\end{rem}

\section{Some useful lemmas}\label{sec3}

In this section we collect some tools and lemmas that will be used throughout the proofs of the main results. 

\subsection{Results on the magnitudes of entries.}

\begin{lem}\label{lem:keylemma1} 

Suppose $M_{p,n}$ is the $p\times n$ rectangular, sparse, heavy tailed matrix. Let $c_{np}$ be as given in (\ref{eqn:cnp}). Then, for all values of $\alpha > 0$ and  $\eta > 0$, we have:
\begin{enumerate}[\normalfont (a)]
\item \label{lemma3.1a} $ \Pro \left\{\exists i, \exists j_{1}\ne j_{2}, 1 \le i \le p, 1\le j_{1}, j_{2} \le n: \min (|m_{ij_{1}}|, |m_{ij_{2}}|)  > c_{np}^{\frac{1+2\mu}{2+2\mu}+\eta} \right\} \to 0
$.
\item  \label{lemma3.1b} $\Pro \left\{\exists j, \exists i_{1}\ne i_{2}, 1\le i_{1}, i_{2} \le p, 1 \le j \le n:   \min (|m_{i_{1}j}|, |m_{i_{2}j}|) > c_{np}^{\frac{1+2\mu}{2+2\mu}+\eta} \right\} \to 0
$.
\end{enumerate}
\end{lem}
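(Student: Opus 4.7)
The plan is a direct first-moment union bound. Since $m_{ij}=a_{ij}b_{ij}$ with $a_{ij},b_{ij}$ independent, a single entry satisfies
\[
\Pro(|m_{ij}|>t) = n^{\mu-1}G_\alpha(t),\qquad t>0,
\]
and distinct entries of $M_{p,n}$ are mutually independent. Set $\theta \mathdef \tfrac{1+2\mu}{2+2\mu}+\eta$ and $s\mathdef c_{np}^\theta$.

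For part \eqref{lemma3.1a}, fix a row index $i$ and a pair $j_1\ne j_2$. By independence,
\[
\Pro\bigl(\min(|m_{ij_1}|,|m_{ij_2}|)>s\bigr) = n^{2(\mu-1)}\,G_\alpha(s)^2.
\]
A union bound over the $p\binom{n}{2}$ admissible triples, combined with $p/n\to\rho$, gives
\[
\Pro(\cdot)\ \le\ C\,n\cdot n^2\cdot n^{2(\mu-1)}\,G_\alpha(s)^2\ =\ C\,n^{1+2\mu}\,G_\alpha(s)^2.
\]
Since $c_{np}\slvar n^{(1+\mu)/\alpha}$ and $G_\alpha$ is regularly varying of index $-\alpha$, one has $G_\alpha(s)=G_\alpha(c_{np}^\theta)\slvar n^{-\theta(1+\mu)}$. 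A short algebraic check using $2\theta(1+\mu) = (1+2\mu) + 2\eta(1+\mu)$ yields
\[
1+2\mu - 2\theta(1+\mu) = -2\eta(1+\mu) < 0,
\]
so the upper bound is $L(n)\,n^{-2\eta(1+\mu)}$ for some slowly varying $L$, which vanishes as $n\to\infty$.

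Part \eqref{lemma3.1b} is entirely symmetric: one unions over the $n\binom{p}{2}=O(n^3)$ column-triples $(j,\{i_1,i_2\})$, the per-triple probability is the same $n^{2(\mu-1)}G_\alpha(s)^2$, and the identical computation applies.

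The whole argument is routine first-moment bookkeeping; there is no real obstacle. The only thing worth noting is that the exponent $\tfrac{1+2\mu}{2+2\mu}$ is precisely calibrated so that the counting factor $n^{1+2\mu}$ and the two-entry tail factor $G_\alpha(s)^2\slvar n^{-2\theta(1+\mu)}$ balance exactly, and the slack $\eta>0$ supplies the extra polynomial decay needed to overpower any slowly varying fluctuation. Any smaller value of $\theta$ would destroy the cancellation and the statement would fail.
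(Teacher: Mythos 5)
Your proof is correct and is essentially the paper's own argument: a union bound over the $O(pn^2)$ triples combined with the single-entry tail $\Pro(|m_{ij}|>c_{np}^{\theta})\slvar n^{\mu-1}n^{-(1+\mu)\theta}$, with the exponent arithmetic reducing to $-2\eta(1+\mu)<0$ exactly as in the paper. No further comment is needed.
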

\begin{proof} Since $a_{ij}$ has a heavy tailed distribution as given in (\ref{eqn:distalpha}), then
\[
\Pro(|a_{ij}| > t^{\theta}) \slvar t^{-\alpha \theta}, \quad \forall \theta > 0.
\]
Hence for the sparse matrix $M_{p,n}$, we have
\[
\Pro(|m_{ij}| > c_{np}^{\theta}) \slvar n^{\mu-1} c_{np}^{-\alpha \theta} \slvar n^{\mu-1} n^{-(1+\mu )\theta}.
\]
The proof for will  follow from this fact and a union bound. Precisely, the left side of \eqref{lemma3.1a} is bounded by  
\begin{align*}
pn^{2} \left( n^{\mu-1}n^{-(1+\mu)(\frac{1+2\mu}{2+2\mu}+{\eta})} \right)^{2} = pn^{2\mu}n^{-(1+\mu)(\frac{1+2\mu}{1+\mu}+2\eta)} = \frac{p}{n}\cdot n^{-2(1+\mu)\eta} \to 0.
\end{align*}
where we used the fact that $p/n\to \rho$. The proof of \eqref{lemma3.1b} is similar.

\end{proof}

\subsection{Results on the sum of entries within rows and columns}
The following lemma will be used to control the sum of absolute values within a given row or a given column of $M_{p,n}$.
\begin{lem}\label{lem:keylemma2} 
Let $M_{p,n}$ be the sparse, heavy tailed, rectangular random matrix. 
\begin{enumerate}[\normalfont (a)]
\item \label{lemma3.2a} For any sequence $\beta_{n}\slvar n^{b}$, where $0 \le b \le\frac{\mu}{\alpha}$,  and $\forall \epsilon > 0$, then w.e.h.p.,
\begin{equation}\label{eq:lemHiSi}
\sum_{j=1}^{n} |m_{ij}| \mathbf 1_{\{ |m_{ij}| < \beta_{n}\}} \le n^{\mu + b(1-\alpha)^{+} + \epsilon},
\end{equation}
where $(1-\alpha)^{+} \mathdef \max\{1-\alpha,\ 0\}$.
\item \label{lemma3.2b} If $\alpha > 1$ and $\mu >0$, then for any sequences $\alpha_{n}\slvar n^{a}$ and $\beta_{n} \slvar n^{b}$ with $0~\le~a~<~b~\le~\frac{\mu}{\alpha}$, and for any $\epsilon > 0$, we have, w.e.h.p.,
\[
\sum_{j=1}^{n}|m_{ij}|\mathbf 1_{\{\alpha_{n} < |m_{ij}| \le \beta_{n}\}} \le n^{\mu - (\alpha-1)a + \epsilon}.
\]
\item \label{lemma3.2c} If $\mu >0$, then for any   sequences $\alpha_{n}\slvar n^{\frac{\mu}{\alpha} - \eta}$ and $\beta_{n} \slvar n^{\frac{\mu}{\alpha}+\eta'}$ with $\eta, 
\eta' \ge 0$, and any $\epsilon > \alpha \eta + \eta'$, we have, w.e.h.p.,
\[
\sum_{j=1}^{n}|m_{ij}|\mathbf 1_{\{\alpha_{n} < |m_{ij}| \le \beta_{n}\}}\le n^{\frac{\mu}{\alpha}+ \epsilon}. 
\]
\item \label{lemma3.2d} For any   sequences $\alpha_{n}\slvar n^{(\mu/\alpha) + \eta}$ and $\beta_{n} \slvar n^{b}$ with $\eta, b> 0$, and any $\gamma > b$, we have w.e.h.p.,
\[
\sum_{j=1}^{n}|m_{ij}|\mathbf 1_{\{\alpha_{n} < |m_{ij}| \le \beta_{n}\}}\le n^{\gamma}.
\] 
\end{enumerate}
Moreover, we have the same bounds for the column sums, that is, the same results in \eqref{lemma3.2a}--\eqref{lemma3.2d} hold if we replace $\sum_{j=1}^{n}$ by $\sum_{i=1}^{p}$ in each part.
\end{lem}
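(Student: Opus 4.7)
My plan is to treat parts \eqref{lemma3.2a}--\eqref{lemma3.2d} uniformly: truncate, estimate the mean via the heavy-tail formula, apply Bernstein's inequality row-by-row, and union bound over the at most $(1+\rho)n$ rows (and symmetrically over $n$ columns). Fix a row $i$ and set $X_{ij} = |m_{ij}|\mathbf 1_{E_{ij}}$, where $E_{ij}$ is the relevant truncation event. The $X_{ij}$ are i.i.d.\ in $j$, bounded by $\beta_n$, and factor as $|a_{ij}|\mathbf 1_{E'_{ij}}b_{ij}$, so by independence of $a$ and $b$,
\[
\mathbb E X_{ij} = n^{\mu-1}\,\mathbb E\bigl[|a_{ij}|\mathbf 1_{E'_{ij}}\bigr].
\]

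For the expectation I would use integration by parts against $\Pro(|a|>t) = L(t) t^{-\alpha}$, giving, for $0\le s<T$,
\[
\mathbb E\bigl[|a| \mathbf 1_{\{s<|a|\le T\}}\bigr] \slvar \begin{cases} s^{1-\alpha}, & \alpha>1,\\ \log(T/s), & \alpha=1,\\ T^{1-\alpha}, & \alpha<1,\end{cases}
\]
(and a constant when $s=0$ and $\alpha>1$). Multiplying by $n\cdot n^{\mu-1}$ returns precisely the targets in \eqref{lemma3.2a}--\eqref{lemma3.2d}. In particular, in \eqref{lemma3.2b} the hypothesis $\alpha>1$ forces domination by the lower endpoint $\alpha_n$, producing $n^{\mu - a(\alpha-1)}$; in \eqref{lemma3.2c} and \eqref{lemma3.2d}, where the window straddles the natural scale $n^{\mu/\alpha}$, both endpoints contribute comparably and the combined contribution is of order $n^{\mu/\alpha + \alpha\eta + \eta'}$, absorbed by $n^\epsilon$ under the condition $\epsilon > \alpha\eta+\eta'$.

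For the fluctuation, let $\sigma_n^2$ be an upper bound on $\mathrm{Var}(X_{ij})$, computed by the same truncated-moment recipe but with $1-\alpha$ replaced by $2-\alpha$. Bernstein's inequality yields
\[
\Pro\bigl(S_i - \mathbb E S_i > t\bigr) \le \exp\!\Bigl(-\frac{t^2/2}{n\sigma_n^2 + \beta_n t/3}\Bigr).
\]
Choosing $t$ equal to the gap between the claimed bound and $\mathbb E S_i$ (so a factor $n^\epsilon$ of slack is always available), the two ratios $n\sigma_n^2/t^2$ and $\beta_n/t$ each vanish as positive powers of $n$: the restriction $b\le \mu/\alpha$ in \eqref{lemma3.2a}, $a<b\le\mu/\alpha$ in \eqref{lemma3.2b}, and the calibration of $\alpha_n,\beta_n$ around $n^{\mu/\alpha}$ in \eqref{lemma3.2c} are precisely what make this balance hold. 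The Bernstein exponent is therefore of order $n^\theta$ for some $\theta>0$, and a union bound over the at most $(1+\rho)n$ rows (resp.\ $n$ columns) preserves exponential decay. Part \eqref{lemma3.2d} is handled slightly differently: since the expected number of nonzero terms is already $o(1)$, a Chernoff bound applied directly to the count $\sum_j \mathbf 1_{\{|m_{ij}|>\alpha_n\}}$ suffices, because each surviving term contributes at most $\beta_n = n^b < n^\gamma$.

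The main obstacle I anticipate is the borderline regime in \eqref{lemma3.2c}, where $\alpha_n$ and $\beta_n$ both hover at the natural scale $n^{\mu/\alpha}$. There the contributions of the two endpoints to both the first and second truncated moments are comparable regardless of whether $\alpha>1$ or $\alpha<1$, and one must verify the sharp trade-off $\epsilon > \alpha\eta + \eta'$ rather than the naive $\epsilon > \max(\alpha\eta,\eta')$. The second subtle point is ensuring that the Bernstein exponent is a \emph{strictly} positive power of $n$ rather than merely nonnegative; this is what forces the $n^\epsilon$ slack in every statement and explains why the bounds cannot be sharpened to the exact target powers without losing the w.e.h.p.\ conclusion.
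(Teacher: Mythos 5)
Your proposal is correct, but it takes a genuinely different route from the paper. The paper never touches the weighted sum directly: it slices the magnitude range into $O(1/\epsilon_0)$ slabs $(n^{k\epsilon_0}, n^{(k+1)\epsilon_0}]$, applies a Bennett-type inequality (the paper's Lemma \ref{lem:chernoff}) only to the \emph{Bernoulli counts} of entries falling in each slab, and bounds each slab's contribution by the count times the slab's upper endpoint; the geometric sum over slabs produces the exponents $b(1-\alpha)^{+}$ and $-(\alpha-1)a$, part \eqref{lemma3.2c} is the single-slab case (sum $\le \beta_n \times$ count, which is exactly where the threshold $\epsilon>\alpha\eta+\eta'$ comes from), and part \eqref{lemma3.2d} is a direct binomial tail computation identical to your count argument. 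Your route instead computes the truncated first and second moments by Karamata-type integration and applies Bernstein once per row. Both work: the paper's version needs concentration only for indicator sums (one reusable lemma, no variance computations for heavy-tailed truncations) at the cost of the multiscale bookkeeping and an $n^{\epsilon_0}$ loss per slab; yours is a one-shot argument that even dispenses with the paper's separate treatment of $\mu=0$ in part \eqref{lemma3.2a}, but it shifts the burden onto verifying that both Bernstein ratios $t^2/(n\sigma_n^2)$ and $t/\beta_n$ are strictly positive powers of $n$ in every regime ($\alpha<2$ vs.\ $\alpha\ge 2$ changes which endpoint controls $\sigma_n^2$). I checked this arithmetic: in \eqref{lemma3.2a} it reduces to $\mu-b\alpha\ge 0$, in \eqref{lemma3.2b} to $2a(\alpha-1)+b(2-\alpha)\le b\alpha\le\mu$ and $a(\alpha-1)+b\le b\alpha\le\mu$, and in \eqref{lemma3.2c} to $\epsilon>\eta(\alpha-2)^{+}/2$ and $\epsilon>\eta'$, all implied by your hypotheses, so the asserted balance does hold; your description of the mean in \eqref{lemma3.2c} as "of order $n^{\mu/\alpha+\alpha\eta+\eta'}$" is an overestimate (only one endpoint dominates, depending on the sign of $\alpha-1$), but since it is used only as an upper bound this is harmless.
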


This Lemma is modified from \cite[Proposition A.6]{BenaychGeorges:2014ki}. The main difference is that in  \cite[Proposition A.6]{BenaychGeorges:2014ki}, because each row has asymptotically $\slvar n^{\mu}$ nonzero entries (nonrandom), the summation in each part ran through only the $\slvar n^{\mu}$ nonzero terms, whereas in our setting, the number of nonzero terms in each row and column and their positions are  random and hence we include every term in a row or column. However, the proof strategy is similar and relies on the following consequence of Bennett's inequality \cite{Bennett:1962ev} (see also \cite[Lemma A.7]{BenaychGeorges:2014ki}).
\begin{lem}\label{lem:chernoff}
For each $n \ge 1$, let $X_{1}, \ldots, X_{m}$ be independent Bernoulli random variables with parameter $p$, where $m$ and $p$ depend on the parameter $n$. If $mp > Cn^{\theta}$ for some constants $C, \theta > 0$, then for any $\eta > 0$, w.e.h.p., 
\[
\left| \frac{1}{m} \sum_{i=1}^{m}X_{i} - p \right| \le \eta p.
\]
\end{lem}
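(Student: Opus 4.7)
The plan is to recognize the statement as a direct (essentially one-line) consequence of a standard exponential concentration inequality for sums of independent bounded random variables. Setting $S_m \mathdef \sum_{i=1}^m X_i$, one has $\E S_m = mp$ and $\Var(S_m) = mp(1-p) \le mp$, so the task reduces to bounding $\Pro(|S_m - mp| > \eta m p)$ above by $e^{-C'n^{\theta'}}$ for some $C', \theta' > 0$ independent of $n$.

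First I would apply Bennett's inequality (as the lemma statement suggests) to the centered summands $X_i - p$, each of which is bounded by $1$ in absolute value and has variance at most $p$. This produces an upper bound of the form
\[
\Pro(|S_m - mp| > \eta m p) \le 2\exp\bigl(-mp\cdot h(\eta)\bigr),
\]
where $h(\eta) = (1+\eta)\log(1+\eta) - \eta > 0$ for every $\eta > 0$. Equivalently, one could quote the multiplicative Chernoff bound for Bernoulli sums, which for $0 < \eta \le 1$ gives the cleaner exponent $-\eta^2 mp/3$; either route yields an exponent proportional to $mp$ with a constant depending only on $\eta$.

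Finally I would plug the hypothesis $mp > Cn^\theta$ directly into that exponent to obtain
\[
\Pro\!\left(\left|\tfrac{1}{m}\sum_{i=1}^m X_i - p\right| > \eta p\right) \le 2\exp\bigl(-C\,h(\eta)\,n^\theta\bigr),
\]
which is precisely the w.e.h.p.\ conclusion (with the constants $C\,h(\eta)$ and $\theta$ inherited from the assumption). I do not anticipate a genuine obstacle: the only points requiring a little care are to handle both upper and lower deviations symmetrically (applying the inequality to $X_i - p$ and to $p - X_i$), and to verify that the constants in the final exponent do not depend on $n$ — both of which are immediate once Bennett's inequality is in hand.
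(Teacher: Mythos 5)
Your proposal is correct and follows exactly the route the paper intends: the paper gives no written proof of this lemma, simply presenting it as a consequence of Bennett's inequality (citing Bennett and \cite[Lemma A.7]{BenaychGeorges:2014ki}), and your application of Bennett to the centered summands $X_i-p$ with the exponent $mp\,h(\eta)$, followed by substituting $mp>Cn^{\theta}$, is the standard and intended argument. The details you flag (two-sided deviations, $n$-independence of the constants) are handled correctly.
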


\begin{proof} [Proof of Lemma \ref{lem:keylemma2}] $ $
\begin{enumerate}[(a)]
\item Assume first that $\mu >0$. Choose $\epsilon_{0} \in (0, \epsilon)$ such that $b/\epsilon_{0} \not \in \mathbb Z$. Let $T=\lfloor b/\epsilon_{0}\rfloor$, then $T\epsilon_{0} < b$. Hence we may choose 
\[
\theta = \frac{\mu - \alpha T \epsilon_{0}}{2} > \frac{\mu - \alpha b}{2} \ge \frac{\mu-\mu}{2} =0. 
\]
For each $k=0,1,\ldots, T$, define 
$
Y_{i}^{(k)} \mathdef \#\{|m_{ij}|: n^{k\epsilon_{0}} < |m_{ij}| \le n ^{(k+1)\epsilon_{0}}\}
$. 
The summation on the left side of \eqref{eq:lemHiSi} is bounded by
\begin{align*}
\sum_{j=1}^{n}|m_{ij}| \mathbf 1_{\{0 < |m_{ij}|\le 1\}} + \sum_{k=0}^{T} Y_{i}^{(k)}n^{(k+1)\epsilon_{0}} 
= \text{(I)} + \text{(II)}.
\end{align*}
Note that (I) is bounded by $\sum_{j=1}^{n}\mathbf 1_{\{0 < |m_{ij}|\}}$, and by Lemma \ref{lem:chernoff}, we have, w.e.h.p.,
\begin{equation}\label{eq:anotherone}
\text{(I)} \le \sum_{j=1}^{n}\mathbf 1_{\{|m_{ij}|>0\}} \le 2n^{\mu} = o(n^{\mu + \epsilon}).
\end{equation}
Moreover, 
$ Y_{i}^{(k)} \le  \sum_{j=1}^{n}\mathbf 1_{\{n^{k\epsilon_{0}} < |m_{ij}| \}},$
where each $\mathbf 1_{\{n^{k\epsilon_{0}} < |m_{ij}| \}}$ is an independent copy of a Ber($ L(n^{k\epsilon_{0}})n^{\mu-1 -\alpha k \epsilon_{0}}$) random variable. Since $\mu - \alpha k \epsilon_{0} > \theta > 0$ for all $k=0, \ldots, T$, we know that w.e.h.p., by Lemma  \ref{lem:chernoff},
\[
Y^{(k)}_{i} \le 2L(n^{k\epsilon_{0}})n^{\mu  -\alpha k \epsilon_{0}}.
\]
Thus, w.e.h.p., (II) is no more than
\[
L(n^{k\epsilon_{0}})\sum_{k=0}^{T} 2n^{\mu-\alpha k \epsilon_{0}} n^{(k+1)\epsilon_{0}} = 2L(n^{k\epsilon_{0}})n^{\mu+\epsilon_{0}}\sum_{k=0}^{T}n^{(1-\alpha)k\epsilon_{0}}.
\]
If $0 < \alpha \le 1$,  then $n^{(1-\alpha)k\epsilon_{0}} \le n^{(1-\alpha)T\epsilon_{0}}$. Since $b > T\epsilon_{0}$ and $\epsilon_{0} < \epsilon$, we have
\begin{align}
\notag
\text{(II)}&\le 2L(n^{k\epsilon_{0}})n^{\mu+\epsilon_{0}}(T+1)n^{(1-\alpha)T\epsilon_{0}}\\
\label{eq:HelloTuca} & \le 2L(n^{k\epsilon_{0}})(T+1) n ^{\mu + \epsilon_{0} + (1-\alpha )b} < n^{\mu+b(1-\alpha)+\epsilon}.
\end{align}
If $\alpha > 1$, then $n^{(1-\alpha)k\epsilon_{0}} \le 1$. Using again $\epsilon_{0} < \epsilon$, we obtain,
\begin{equation}\label{eq:HelloSi}
\text{(II)}\le 2L(n^{k\epsilon_{0}})n^{\mu+\epsilon_{0}} \cdot (T+1) \le n^{\mu+\epsilon}.
\end{equation}
Combining \eqref{eq:anotherone}, \eqref{eq:HelloTuca} and \eqref{eq:HelloSi}, we prove Part \eqref{lemma3.2a} for $\mu>0$. In the case $\mu=0$, then $b=0$ and the number $V_n$ of nonzero terms in the sum $\sum_{0<|m_{ij}|}|m_{ij}|$ converges in distribution to a Poisson random variable of mean $1$. It suffices then to bound the sum in \eqref{eq:lemHiSi} by $\beta_n V_n$ to obtain the desired result.
\item Just like in Part \eqref{lemma3.2a}, we choose $\epsilon_{0} < \epsilon$ such that $\frac{b-a}{\epsilon_{0}} \not \in \mathbb Z$, and take $T = [\frac{b-a}{\epsilon_{0}}]$. \\
Set $\theta = \frac{\mu- \alpha (a+ T\epsilon_{0})}{2} >  \frac{\mu- \alpha b}{2} \ge 0$. Define 
\[
Y_{i}^{(k)}\mathdef \# \{|m_{ij}|: \alpha_{n} n ^{k\epsilon_{0}} < |m_{ij}| \le \alpha_{n} n ^{(k+1) \epsilon_{0}}\}.
\]
Then, for each $k$, $Y_{i}^{(k)}$ is bounded by $\# \{|m_{ij}|: \alpha_{n} n ^{k\epsilon_{0}} < |m_{ij}|\}$, which is distributed as a Bin$(n, n^{\mu-1}L(\alpha_{n} n ^{k\epsilon_{0}})(\alpha_{n} n ^{k\epsilon_{0}})^{-\alpha} )$ random variable. Again, for each $k\le T$,
\begin{align*}
n \cdot n^{\mu-1}L(\alpha_{n} n ^{k\epsilon_{0}})(\alpha_{n} n ^{k\epsilon_{0}})^{-\alpha}
 \slvar n^{\mu-(a+k\epsilon_{0})\alpha} > n^{\theta}.
\end{align*}
From Lemma \ref{lem:chernoff}, w.e.h.p., $Y_{i}^{(k)} \le 2 n^{\mu-\alpha (a+k\epsilon_{0})+\delta}$, for arbitrarily small $\delta>0$. Then, we have w.e.h.p.,
\begin{align*}
&\sum_{j=1}^{n}|m_{ij}| \mathbf 1_{\{\alpha_{n} < |m_{ij}| \le \beta_{n}\}} \le \sum_{k=0}^{T}Y_{i}^{(k)}\alpha_{n} n^{(k+1)\epsilon_{0}}\le 2 \sum_{k=0}^{T} n^{\mu-\alpha (a+k\epsilon_{0})+\delta} n^{a+(k+1)\epsilon_{0}+\delta'}\\
&\le n^{\mu+\epsilon_{0} + \delta'' - a(\alpha-1)} \sum_{k=0}^{T}n^{ k\epsilon_{0}(1-\alpha)} \le n^{\mu+\epsilon_{0} + \delta'' - a(\alpha-1)} (T+1) \le n^{\mu-a(\alpha-1)+\epsilon}.
\end{align*}
\item For any $\delta, \delta' >0$, the left side is no larger than
$
n^{\frac{\mu}{\alpha} +\eta'+ \delta'} \sum_{j=1}^{n}\mathbf 1_{\{n^{\frac{\mu}{\alpha} - \eta -\delta} < |m_{ij}| \}}
$, where $ \sum_{j=1}^{n}\mathbf 1_{\{n^{\frac{\mu}{\alpha} - \eta -\delta} < |m_{ij}| \}}\sim $Bin$(n, n^{\mu-1} L'(n)n ^{(\frac{\mu}{\alpha} - \eta -\delta)(-\alpha)})$, for some slowly varying function $L'$. Since
\[
n\cdot n^{\mu-1} L'(n)n ^{(\frac{\mu}{\alpha} - \eta -\delta)(-\alpha)} =  L'(n)n^{\alpha(\eta + \delta)} \ge n^{\theta}, \quad\text{for } \theta =\alpha \eta /2 > 0,
\] 
then w.e.h.p., $ \sum_{j=1}^{n}\mathbf 1_{\{n^{\frac{\mu}{\alpha} - \eta -\delta}<|m_{ij}|\} } \le 2L'(n)n^{\alpha(\eta + \delta)}$. But $\delta, \delta' > 0$ can be chosen arbitrarily small, so as long as $\epsilon > \alpha \eta + \eta'$, w.e.h.p., the left side is bounded by
\[
n^{\frac{\mu}{\alpha}+ \eta' + \delta'}  \cdot 2L'(n)n^{\alpha(\eta + \delta)} =n^{\frac{\mu}{\alpha}+\alpha \eta + \eta'} \cdot (2L'(n)n^{\alpha\delta + \delta'})\le  n^{\frac{\mu}{\alpha} + \epsilon}.
\]

\item We compute this probability directly. Write $S_{i}\mathdef \sum_{j=1}^{n}|m_{ij}| \mathbf 1_{\{\alpha_{n} < |m_{ij}|\le \beta_{n}\}}$. \\
For any $\gamma > b$, and choose $\epsilon, \epsilon' >0$ sufficiently small (e.g., $\epsilon < \alpha \eta/2, \epsilon' < (\gamma -b )/2$),
\begin{align*}
\Pro(S_{i} > n^{\gamma}) &\le \Pro(\text{at least } \lfloor n^{\gamma}/\beta_{n} \rfloor \text{ terms are nonzero among the $n$ $m_{ij}$'s})\\
&\le n^{\lfloor n^{\gamma}/\beta_{n}\rfloor} \left[\Pro(|m_{ij}| > \alpha_{n}) \right]^{\lfloor n^{\gamma}/\beta_{n}\rfloor}\\
&= [n\cdot n^{\mu-1} L(\alpha_{n}) \alpha_{n}^{-\alpha}]^{\lfloor n^{\gamma}/\beta_{n}\rfloor }\le [n^{\mu}n^{(\mu/\alpha + \eta) (-\alpha)+\epsilon}]^{\lfloor n^{\gamma}/\beta_{n}\rfloor }\\
&\le[n^{ - \alpha\eta +\epsilon}]^{n^{\gamma-b -\epsilon'}} \le n^{(-\alpha \eta/2) \cdot n^{(\gamma-b)/2} } = e^{-n^{(\gamma-b)/2}\cdot (\alpha \eta \log n)/2} \le e^{-n^{\theta}}.
\end{align*}
\end{enumerate}
\end{proof}
\subsection{An upper bound on the trace}

We now prove an upper bound for the norm of the truncated sample covariance matrix, as given below. 

\begin{theorem}
\label{thm:traceboundcov}  Suppose $\alpha > 2$ and $x$ has mean zero and variance one. 
Let $M_{p,n}=A_{p,n}\cdot B_{p,n}$ be the sparse $p \times n$ matrix with heavy tailed entries. Consider $\gamma, \gamma'>0$ such that $\gamma' >  \gamma$ and $\gamma' \ge \frac{\mu}{2}$. Define the truncated matrix 
\[
\hat M_{p,n}  = [\hat m_{ij}]_{i, j=1}^{p,n}= [m_{ij}\mathbf 1_{\{|m_{ij}| \le n^{\gamma}\}}]_{i,j=1}^{p,n}.
\]
We also assume that the truncated entries are centered. Then for any $\kappa > 1$, 
\begin{equation*}
\Pro\bigg(||\hat M_{p,n}\hat M_{p,n}^{*}|| \ge \kappa  n^{2\gamma'} (1+\sqrt{\rho})^{2}\bigg) \to 0.
\end{equation*}
\end{theorem}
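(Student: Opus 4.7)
The plan is to apply the high-moment trace method. Since $\hat M_{p,n}\hat M_{p,n}^{*}$ is positive semidefinite, $\|\hat M_{p,n}\hat M_{p,n}^{*}\|^{k} = \lambda_{1}(\hat M_{p,n}\hat M_{p,n}^{*})^{k} \le \Tr\bigl((\hat M_{p,n}\hat M_{p,n}^{*})^{k}\bigr)$ for every integer $k\ge 1$. Setting $t_{n} = \kappa n^{2\gamma'}(1+\sqrt{\rho})^{2}$ and applying Markov,
\[
\Pro\bigl(\|\hat M_{p,n}\hat M_{p,n}^{*}\|\ge t_{n}\bigr) \le t_{n}^{-k}\,\E\bigl[\Tr\bigl((\hat M_{p,n}\hat M_{p,n}^{*})^{k}\bigr)\bigr],
\]
and I would let $k=k(n)\to\infty$ slowly (e.g., $k\asymp (\log n)^{2}$) so that the ratio vanishes.

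Next, I would expand
\[
\Tr\bigl((\hat M_{p,n}\hat M_{p,n}^{*})^{k}\bigr) = \sum \hat m_{i_{1}j_{1}}\overline{\hat m_{i_{2}j_{1}}}\hat m_{i_{2}j_{2}}\overline{\hat m_{i_{3}j_{2}}}\cdots\hat m_{i_{k}j_{k}}\overline{\hat m_{i_{1}j_{k}}},
\]
where the sum runs over $(i_{1},\ldots,i_{k})\in[p]^{k}$, $(j_{1},\ldots,j_{k})\in[n]^{k}$. Each index tuple encodes a closed walk of length $2k$ on the complete bipartite graph $K_{p,n}$, and by independence plus the centering assumption, only walks in which every distinct edge is traversed at least twice contribute. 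The dominant contribution comes from \emph{tree walks}, in which the underlying edge set forms a bipartite tree and each edge is used exactly twice. Since $\alpha>2$, dominated convergence gives $\sigma^{2}\mathdef\E|\hat m_{ij}|^{2} = n^{\mu-1}(1+o(1))$, and the usual Y\u in-Bai-Krishnaiah / F\" uredi-Koml\' os enumeration (counting row- and column-vertex roles in a rooted planar bipartite tree) produces a total tree contribution bounded by
\[
p \cdot (n\sigma^{2})^{k}\cdot N_{k}(p/n) \;\le\; p\cdot n^{\mu k}(1+\sqrt{\rho})^{2k}\,(1+o(1))^{k},
\]
where $N_{k}(\rho)\le (1+\sqrt{\rho})^{2k}$ is the $k$-th moment of the Marchenko-Pastur law with parameter $\rho$.

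For the remaining (non-tree) walks—either some edge is used $s\ge 3$ times or the underlying graph contains an excess cycle—I would use the truncation bound $|\hat m_{ij}|\le n^{\gamma}$, so that $\E|\hat m_{ij}|^{2s}\le n^{2\gamma(s-1)}\sigma^{2}$. Each excess use of an edge (relative to the tree multiplicity two) therefore costs a factor of order $n^{2\gamma}/(n\sigma^{2}) = n^{2\gamma-\mu}(1+o(1))$; each surplus cycle costs a similar factor after accounting for one fewer free vertex index. Because $\gamma<\gamma'$ and $\mu\le 2\gamma'$, each excess factor is at most $n^{2\gamma'-\mu-\epsilon}$ for some $\epsilon>0$. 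The number of distinct walk shapes with a prescribed excess count is exponential in $k$ (bounded by a Catalan-type constant), and summing across shapes gives a non-tree total still dominated by $p\cdot n^{\mu k}(1+\sqrt{\rho})^{2k}(1+o(1))^{k}$, and in fact made negligible by the slack $\gamma'-\gamma>0$ after division by $t_{n}^{k}$.

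Combining the two contributions,
\[
\Pro\bigl(\|\hat M_{p,n}\hat M_{p,n}^{*}\|\ge t_{n}\bigr)\;\le\;\kappa^{-k}\cdot p\cdot n^{(\mu-2\gamma')k}(1+o(1))^{k},
\]
which tends to $0$ as $k,n\to\infty$ since $\mu\le 2\gamma'$ and $\kappa>1$. The main obstacle will be the combinatorial bookkeeping behind the non-tree estimate: one must enumerate walk shapes carefully enough to verify that every excess edge-use or cycle is paid for by the $n^{2\gamma'-\mu}$ slack (rather than by a surplus vertex choice of size $n$ or $p$) and that the remaining combinatorial factors do not exceed $(1+o(1))^{k}$. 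The argument closely mirrors Soshnikov's moment method for Wishart matrices and the sparse Hermitian adaptation of Benaych-Georges-P\'ech\'e \cite{BenaychGeorges:2014ki}, with the added requirement of tracking row- versus column-vertex roles in the bipartite walk shapes.
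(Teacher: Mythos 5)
Your outline follows essentially the same route as the paper: Markov's inequality applied to $\E\,\Tr\bigl((\hat M_{p,n}\hat M_{p,n}^{*})^{k}\bigr)$ with $k\to\infty$, the Yin--Bai--Krishnaiah classification of closed bipartite walks into tree (innovation/$T_3$) and defective ($T_4$) edges, and truncation bounds on the high moments, with the hard shape-enumeration imported from the literature in both cases. Two bookkeeping differences are worth noting. First, the paper does not fold the sparsity into $\E|\hat m_{ij}|^{2}=n^{\mu-1}$ with free vertex choices of size $n$ or $p$; instead it conditions on the event $E_{n}$ that every row (resp.\ column) has at most $Cn^{\mu}$ (resp.\ $Cpn^{\mu-1}$) nonzero entries, splits $\hat m_{ij}=\hat a_{ij}b_{ij}$ by independence, and counts each innovation's constrained vertex choices; the two accountings give the same exponents and yours is arguably cleaner for the stated result, since $\Pro(E_{n}^{c})$ must be handled separately in the paper. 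Second, your claim that the number of walk shapes with prescribed excess is controlled by a ``Catalan-type constant'' to the power $k$ is too optimistic: the count carries a factor polynomial in $k$ per defect (the paper's $\sum''\le k^{2q}(q+1)^{6k-6l}$), so each $T_{4}$-edge really costs $k^{O(1)}n^{2\gamma}$ rather than $O(n^{2\gamma})$. This is exactly why the paper takes $k=\lfloor n^{\gamma''}\rfloor$ with $\gamma'>\gamma+6\gamma''$ so that $k^{12}n^{2\gamma}=o(n^{2\gamma'})$; your choice $k\asymp(\log n)^{2}$ makes the polynomial factor harmless and still beats the prefactor $p\kappa^{-k}$, so the argument closes, but the combinatorial factor must be tracked rather than dismissed as $(1+o(1))^{k}$.
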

\begin{proof} 
For $\kappa > 1$ given, we find $C \in (1, \kappa)$ and let $E_{n}$ be the event 
\[
E_{n} = \{\mathsf L\le Cn^{\mu}, \mathsf {\tilde L} \le Cpn^{\mu-1}\},
\] 
where $\mathsf L$ (resp. $\mathsf {\tilde L}$) is the maximum number of nonzero entries in a row (resp. a column) among the $p$ rows (resp. $n$ columns) of $M_{p,n}$. We break the desired probability into two parts:
\begin{align*}
\Pro\bigg(||\hat M_{p,n}\hat M_{p,n}^{*}|| \ge \kappa  n^{2\gamma'} (1+\sqrt{\rho})^{2}, E_{n}\bigg) + \Pro\bigg(||\hat M_{p,n}\hat M_{p,n}^{*}|| \ge \kappa  n^{2\gamma'} (1+\sqrt{\rho})^{2}, E_{n}^{c}\bigg). 
\end{align*}
Since the second term is bounded by $\Pro(E_{n}^{c})$ which vanishes as $n \to \infty$ by Chernoff inequality, it suffices to prove that the first term vanishes as well. To do this, 
we choose $\gamma'' > 0$ such that $\gamma' > \gamma + 6\gamma''$ and set $k=k_{n}= \lfloor n^{\gamma''}\rfloor$. 
We will  prove that for any $\delta>0$ small, 
\begin{align}
\label{eqn:traceboundcov:toprove}
\E (\Tr(\hat M_{p,n} \hat M_{p,n}^{*})^{k} \mathbf 1_{E_{n}} ) \le P(n)
 \left[ (1+\sqrt{\delta})^{2}Cn^{2\gamma'}(1+\sqrt{\rho})^{2}\right]^{k},\end{align}
 where $P(n)$ is some polynomial of $n$. It will then follow from \eqref{eqn:traceboundcov:toprove} that
\begin{equation*}
\Pro\bigg(||\hat M_{p,n}\hat M_{p,n}^{*}|| \ge \kappa  n^{2\gamma'} (1+\sqrt{\rho})^{2}, E_{n}\bigg)  \le\frac{\E (\Tr(\hat M_{p,n} \hat M_{p,n}^{*})^{k } \mathbf 1_{E_{n}} )}{\kappa^{k} [ n^{2\gamma'}]^{k} (1+\sqrt{\rho})^{2k} }\le  P(n) \left[\frac{C(1+\sqrt{\delta})^{2}}{\kappa}\right]^{k}. 
\end{equation*}
The right side goes to zero as $n\to \infty$, if we choose $\delta > 0$ such that $C(1+\sqrt{\delta})^{2} < \kappa$. 
To show \eqref{eqn:traceboundcov:toprove}, we will make use of the combinatorics that was invented in \cite{Yin:1988kp} to prove the convergence of the largest eigenvalue of random sample covariance matrices. \\
We first expand the left side of~\eqref{eqn:traceboundcov:toprove}:
\begin{align*}
\E (\Tr(\hat M_{p,n} \hat M_{p,n}^{*})^{k} \mathbf 1_{E_{n}} ) = \sum_{i_{1}, i_{3},\ldots, i_{2k-1}=1}^{p}\sum_{i_{2}, i_{4},\ldots, i_{2k}=1}^{n}\E (\hat m_{i_{1}i_{2}}\hat m_{i_{3}i_{2}}\cdots \hat m_{i_{2k-1}i_{2k}}\hat m_{i_{1}i_{2k}} \mathbf 1_{E_{n}}).
\end{align*}
Then, we associate each summand on the right side with an undirected graph $G_{\mathbf i}$ that has vertices $\{i_{1}, \ldots, i_{2k}\}$ and edges $\{(i_{1},i_{2}), (i_{2},i_{3}), \ldots, (i_{2k-1},i_{2k}), (i_{2k},i_{1}) \}$. We read the vertices  sequentially, $i_{1},i_{2},i_{3},\ldots, i_{2k}$, one at a time, and classify the edges into four different types. We call an edge $(i_{s-1}, i_{s}), s \ge 2$,  an {\em innovation} if $i_{s}$ does not occur in $i_{1}, \ldots, i_{s-1}$. An innovation $(i_{s-1},i_{s})$ is called a {\em row innovation} if $s$ is odd and  a {\em column innovation} if $s$ is even. If two edges $(i_{a-1}, i_{a})$ and $(i_{b-1}, i_{b})$ have the same set of vertices, i.e., $\{i_{a-1}, i_{a}\} = \{i_{b-1}, i_{b}\}$, we say that they {\em coincide}.  And an edge $(i_{a-1}, i_{a})$ is said to be {\em single up to $i_{b}$}, with $b \ge a$,  if there is no other edge $(i_{c-1}, i_{c})$ with $2\le c \le b$ that coincides with $(i_{a-1}, i_{a})$. For $b \ge 3$, we call $(i_{b-1}, i_{b}) $ a {\em $T_{3}$-edge} if there is an innovation $(i_{a-1}, i_{a}), a <b,$ that is single up to $i_{b-1}$ and coincides with $(i_{b-1}, i_{b})$. And finally, an edge is called a {\em $T_{4}$-edge} if it is neither a $T_{3}$-edge nor an innovation. Hence, observing $\hat m_{ij} = a_{ij}\mathbf 1_{\{|a_{ij}| \le n^{\gamma}\}} \cdot b_{ij} \mathdef \hat a_{ij} b_{ij}$ and using independence, the expectation can be rewritten as
\begin{align*}
\E (\Tr(\hat M_{p,n} \hat M_{p,n}^{{*}})^{k} \mathbf 1_{E_{n}} ) &= {\textstyle \large \sum' \sum'' \sum'''} \E (\hat m_{i_{1}i_{2}}\hat m_{i_{3}i_{2}}\cdots \hat m_{i_{2k-1}i_{2k}}\hat m_{i_{1}i_{2k}} \mathbf 1_{E_{n}})\\
&= {\textstyle \large \sum' \sum'' \sum'''} \E (\hat a_{i_{1}i_{2}}\hat a_{i_{3}i_{2}}\cdots \hat a_{i_{1}i_{2k}} )\E(b_{i_{1}i_{2}} b_{i_{3}i_{2}}\cdots b_{i_{1}i_{2k}}\mathbf 1_{E_{n}})
\end{align*}
where $\sum'$ sums over all possible arrangements of the four types of the edges, $\sum''$ is to count the total number of different canonical graphs given the arrangements of the four types of edges, and $\sum'''$ runs through all graphs that are isomorphic to the given canonical graph. 

Let $l$ be the number of $T_{3}$-edges. Note $l$ is also the number of innovations since every edge must be visited at least twice, and hence $(2k-2l)$ is the number of $T_{4}$-edges. Let $r$ be the number of row innovations. We see that $\sum'$ is bounded by $\sum_{l=1}^{k}\sum_{r=0}^{l}\binom{k}{r}\binom{k}{l-r}\binom{2k-l}{l}$. Since every row innovation $(i_{2s-2}, i_{2s-1})$ leads to a new vertex $i_{2s-1} \in \{1,2,\ldots, p\}$ and every column innovation $(i_{2s-1}, i_{2s})$ leads to a new vertex $i_{2s} \in \{1,2, \ldots, n\}$, except for the first innovation $(i_{1}, i_{2})$, which leads to both a new vertex $i_{1} \in \{1,2, \ldots p\}$ and a new vertex $i_{2} \in \{1,2,\ldots, n\}$, then, on the event $E_{n}$, there are at most $p(Cpn^{\mu-1})^{r}(Cn^{\mu})^{l-r}$ terms that have nonzero contributions to $\sum'''$. Let $q$ be the number of distinct $T_{4}$-edges. It was shown in \cite[page 519]{Yin:1988kp} that $\sum''$ is bounded by $k^{2q}(q+1)^{6k-6l}$.

Finally, let $b$ be the number of $T_{4}$-edges among the $q$ distinct ones that coincide with some innovations, and let $n_{s}, s=1,2,\ldots, b$, be the multiplicity of  the $T_{4}$-edges of the $s$-th such coincidence. Then, $(q-b)$ distinct $T_{4}$-edges do not coincide with any innovations but only among the $T_{4}$-edges, and we denote by $m_{t}, t=1,2,\ldots, (q-b)$ the multiplicity of the $t$-th such coincidence. These numbers have to satisfy the relation
$
2k~-~2l~=~\sum_{s=1}^{b}n_{s}~+~\sum_{t=1}^{q-b}m_{t}
$. 
Hence, for such composition of four types of edges, we can write 
\begin{align*}
&\quad \E (\hat a_{i_{1}i_{2}}\hat a_{i_{3}i_{2}}\cdots \hat a_{i_{2k-1}i_{2k}}\hat a_{i_{1}i_{2k}}) \\
&= 
\left(\E \hat a_{11}^{2} \right)^{l-b} \prod_{s=1}^{b}\left( \E \hat a_{11}^{n_{s}+2}\right)\prod_{t=1}^{q-b}\left( \E \hat a_{11}^{m_{t}}\right) \\
&\le L_{0}(n^{\gamma})^{q} \prod_{\substack{s=1\\ n_{s}+2 > \alpha}}^{b}\left(\frac{n_{s}+2}{n_{s}+2-\alpha}\right) n^{\gamma (n_{s}+2-\alpha)}\prod_{\substack{t=1\\ m_{t} > \alpha}}^{q-b}\left(\frac{m_{t}}{m_{t}-\alpha}\right) n^{\gamma (m_{t}-\alpha)},
\end{align*}
where $L_{0}$ is a slowly varying function. Here, in the last inequality, we have used the following classic fact for the moments of truncated, heavy tailed random variables (see, e.g., \cite[Proposition 1.5.8]{Regular} or \cite[Lemma A.8]{BenaychGeorges:2014ki}),
\[
\E |a_{11}|^{s}\mathbf 1_{|a_{11}|\le x} = \left\{
\begin{array}{ll}
L_{0}(x), & \text{ if } s\le \alpha\\
L_{0}(x) \frac{s}{s-\alpha} x^{s-\alpha}, & \text{ if } s > \alpha
\end{array}
\right..
\]
Observing that (i) $n_{s}\le 2k-2$, $2\le m_{t} \le 2k-2$, (ii) $\frac{1}{m-\alpha}$ is bounded above, say, by some $C_{\alpha}$, which only depends on $\alpha$, for all $m > \alpha$, and  $m \in \mathbb Z$, and  (iii) $\alpha > 2$, we have 
\begin{align*}
\E (\hat a_{i_{1}i_{2}}\hat a_{i_{3}i_{2}}\cdots \hat a_{i_{2k-1}i_{2k}}\hat a_{i_{1}i_{2k}}) 
&\le L_{0}(n^{\gamma})^{q} (2C_{\alpha} k)^{q} n^{\gamma [\sum_{s=1}^{b}(n_{s}+2-\alpha)^{+} + \sum_{t=1}^{q-b}(m_{t}-\alpha)^{+}]}\\
&\le L_{0}(n^{\gamma})^{q} (2C_{\alpha} k)^{q} n^{\gamma [\sum_{s=1}^{b}(n_{s}+2-2)^{+} + \sum_{t=1}^{q-b}(m_{t}-2)^{+}]}\\
&\le L_{0}(n^{\gamma})^{q} (2C_{\alpha} k)^{q} n^{\gamma [(2k-2l) - 2(q-b)]}
\end{align*}
where $f^{+} = \max (f, 0)$. After reorganizing and combining the terms, the expectation of the trace is then bounded by
\begin{align*}
\E (\Tr(\hat M_{p,n} \hat M_{p,n}^{*})^{k} \mathbf 1_{E_{n}} ) \le & p\sum_{l=1}^{k}\sum_{r=0}^{l}\binom{k}{r}\binom{k}{l-r}\binom{2k-l}{l}(Cpn^{\mu-1})^{r}(Cn^{\mu})^{l-r} \\
&\cdot \left(\sum_{q=0}^{2k-2l} 
(q+1)^{6k-6l} (2L_{0}(n^{\gamma})C_{\alpha}k^{3})^{q}n^{\gamma(2k-2l-2q)} \sum_{b=0}^{q}  n^{2\gamma b} \right).
\end{align*}
We now consider the terms inside the bracket. Firstly,  for $\gamma > 0$,
\begin{align*}
\sum_{b=0}^{q}  n^{2\gamma b} = \frac{n^{2\gamma(q+1)}-1}{n^{2\gamma}-1} \le  \frac{n^{2\gamma(q+1)}}{n^{2\gamma}/2} =2n^{2\gamma q}.
\end{align*}
Next, we use the elementary inequality $(q+1)^{z}\le w^{q+1}(z/\log w)^{z}$ for any $w > 1, z>0, q>0$. In what follows, we apply this inequality, substituting $w=2$ and $z=6k-6l$, and $L_{j}(\cdot)$'s are all slowly varying functions.  
\begin{align*}
& \sum_{q=0}^{2k-2l} (q+1)^{6k-6l} (2L_{0}(n^{\gamma})C_{\alpha}k^{3})^{q}n^{\gamma(2k-2l-2q)} \sum_{b=0}^{q}  n^{2\gamma b}\\ 
\le &\  \sum_{q=0}^{2k-2l} 2^{q+1}(6k/\log 2)^{6k-6l} (L_{1}(n^{\gamma})k^{3})^{q}n^{\gamma(2k-2l-2q)} n^{2\gamma q}\\
\le &\  2(6n^{\gamma/3}k/\log 2)^{6k-6l} \cdot \sum_{q=0}^{2k-2l} (2L_{1}(n^{\gamma})k^{3})^{q}\\
\le & \ (6n^{\gamma/3}k/\log 2)^{6k-6l} (L_{2}(n^{\gamma})k^{3})^{2k-2l}\le \left(k^{2} (n^{\gamma}L_{3}(n^{\gamma}))^{1/3} \right)^{6k-6l}. 
\end{align*}
Next, using the combinatorial inequality that for any $\delta > 0$ (see \cite[Lemma 2.1]{Yin:1988kp}),
\[
\binom{k}{r}\binom{k}{l-r}\binom{2k-l}{l} \le (1+\sqrt{\delta})^{2k}\delta^{l-k}\binom{k}{l}\binom{2l}{2r},
\]
we get
\begin{align*}
& \E (\Tr(\hat M_{p,n} \hat M_{p,n}^{*})^{k} \mathbf 1_{E_{n}} ) \\
 &\ \le p (1+\sqrt{\delta})^{2k} \sum_{l=1}^{k} \binom{k}{l} \sum_{r=0}^{l} \binom{2l}{2r} (p/n)^{r}(Cn^{\mu})^{l} \left(k^{2} (n^{\gamma}L_{3}(n^{\gamma}))^{1/3} \delta^{-1/6}\right)^{6k-6l} \\
 &\ \le P(n) (1+\sqrt{\delta})^{2k} \sum_{l=1}^{k} \binom{k}{l}(1+\sqrt{p/n})^{2l}(Cn^{\mu})^{l} \left(k^{2} (n^{\gamma}L_{3}(n^{\gamma}))^{1/3} \delta^{-1/6}\right)^{6k-6l} \\
 &\ \le P(n)  \left[ (1+\sqrt{\delta})^{2}Cn^{\mu}(1+\sqrt{p/n})^{2} + \left(k^{2} (n^{\gamma}L_{4}(n^{\gamma}))^{1/3} \delta^{-1/6} \right)^{6} \right]^{k}.
\end{align*}
Since $\mu \le 2\gamma' $, and for any $\delta >0$, $k = \lfloor n^{\gamma''} \rfloor $, 
\[
\left(k^{2} (n^{\gamma}L_{4}(n^{\gamma}))^{1/3} \delta^{-1/6} \right)^{6} \slvar n^{2(\gamma +6\gamma'')} = o(n^{2\gamma'}),
\] 
we get \eqref{eqn:traceboundcov:toprove}. 
\end{proof}

\begin{rem}\label{bandremark}
All the proofs in this subsection (and further on) only used sparseness to determine the number of nonzero entries in each row and column. The exact location of the nonzero entries plays no role in the proof. Hence, all our results hold for a larger class of sample covariance matrices including those constructed from banded rectangular matrices.  
\end{rem}

\subsection{Perturbations of eigenvalues and eigenvectors.} The next two lemmas are classical tools of perturbation theory of eigenvalues. 
\begin{theorem}[Cauchy interlacing theorem] \label{thm:cauchy} Let $1\le p\le n$. 
\begin{enumerate}[\normalfont (a)]
\item \label{lemma3.5a}Let $A_{n}$ be an $n \times n$ Hermitian matrix and $A_{n-1}$ be its $(n-1)\times (n-1)$ minor, then $\lambda_{1}(A_{n}) \ge \lambda_{1}(A_{n-1}) \ge \lambda_{2}(A_{n}) \ge \cdots \ge \lambda_{n-1}(A_{n-1}) \ge \lambda_{n}(A_{n})$;
\item \label{lemma3.5b} Let $A_{p, n}$ be a $p \times n$ matrix and $A_{(p-1), n}$ be its $(p-1)\times n$ minor, then\\ $\sigma_{1}(A_{p, n}) \ge \sigma_{1}(A_{(p-1), n}) \ge \sigma_{2}(A_{p, n}) \ge \cdots \ge \sigma_{p-1}(A_{(p-1), n}) \ge \sigma_{p}(A_{p, n})$;
\item \label{lemma3.5c} If $p<n$ and  $A_{p, n}$ is a $p \times n$ matrix and $A_{p, (n-1)}$ is its $p\times (n-1)$ minor, then\\ $\sigma_{1}(A_{p, n}) \ge \sigma_{1}(A_{p, (n-1)}) \ge \sigma_{2}(A_{p, n}) \ge \cdots \ge \sigma_{p-1}(A_{p, (n-1)}) \ge \sigma_{p}(A_{p, n})$,
\end{enumerate}
where in (b) and (c), $\sigma_{i}(\cdot)$ denotes the $i$-th largest singular value.
\end{theorem}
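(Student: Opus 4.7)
The plan is to establish part (a) directly from the Courant--Fischer min--max characterization of eigenvalues, and then to deduce parts (b) and (c) as corollaries by passing to the Gram matrices $A_{p,n}A_{p,n}^{*}$ and $A_{p,n}^{*}A_{p,n}$, respectively, and invoking part (a) on these Hermitian objects.

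For part (a), the starting point is the two-sided formula
\[
\lambda_{k}(H)=\max_{\dim V=k}\,\min_{v\in V,\,\|v\|_{2}=1}v^{*}Hv=\min_{\dim W=N-k+1}\,\max_{v\in W,\,\|v\|_{2}=1}v^{*}Hv,
\]
valid for any $N\times N$ Hermitian $H$ and $1\le k\le N$. Identify $\mathbb{C}^{n-1}$ with the coordinate subspace of $\mathbb{C}^{n}$ corresponding to the rows/columns kept in $A_{n-1}$, so that $v^{*}A_{n}v=v^{*}A_{n-1}v$ for every $v$ in this subspace. The lower bound $\lambda_{k}(A_{n})\ge \lambda_{k}(A_{n-1})$ follows from the max--min formula by using an optimal $k$-dimensional subspace $V\subset\mathbb{C}^{n-1}$ for $A_{n-1}$ as a candidate subspace for $A_{n}$. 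The companion bound $\lambda_{k}(A_{n-1})\ge \lambda_{k+1}(A_{n})$ follows from the min--max formula applied to $A_{n}$: any $(n-k)$-dimensional subspace $W\subset\mathbb{C}^{n-1}$ is, in particular, an admissible $(n-k)$-dimensional subspace of $\mathbb{C}^{n}$, so the minimum over the smaller family of subspaces dominates the minimum over the larger family.

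For part (b), removing the $i$-th row of $A_{p,n}$ corresponds, on the $p\times p$ Gram matrix $A_{p,n}A_{p,n}^{*}$, to deleting its $i$-th row and $i$-th column, producing exactly the $(p-1)\times(p-1)$ principal minor $A_{(p-1),n}A_{(p-1),n}^{*}$. Since the singular values of a rectangular matrix are the non-negative square roots of the eigenvalues of the corresponding Gram matrix, part (a) applied to $A_{p,n}A_{p,n}^{*}$ and termwise square roots (which preserve the ordering of non-negatives) deliver the interlacing in (b). For part (c), the dual observation is used: $A_{p,n}^{*}A_{p,n}$ is $n\times n$, and deleting the $j$-th column of $A_{p,n}$ removes the $j$-th row and column of $A_{p,n}^{*}A_{p,n}$, giving the principal minor $A_{p,(n-1)}^{*}A_{p,(n-1)}$; apply (a) and take square roots again.

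There is no substantive obstacle here, but one piece of bookkeeping deserves attention in (c). The Gram matrices $A_{p,n}^{*}A_{p,n}$ and $A_{p,(n-1)}^{*}A_{p,(n-1)}$ carry zero blocks of multiplicities $n-p$ and $n-1-p$ respectively, coming from the dimension mismatch. The hypothesis $p<n$ forces $p\le n-1$, so both matrices have at least $p$ eigenvalues that are genuine squared singular values, and the first $2p-1$ eigenvalues that appear in the interlacing chain from (a) correspond exactly to the $p$ singular values of $A_{p,n}$ and the $p$ singular values of $A_{p,(n-1)}$ claimed in the statement. Once this check is made, the claim is immediate.
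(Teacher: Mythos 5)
Your argument is correct. Note, however, that the paper does not actually prove Theorem 3.5: it simply cites \cite[Lemma 22]{Tao:2012ix} and moves on, so there is no in-paper proof to match. What you supply is the standard self-contained derivation: part (a) from the two-sided Courant--Fischer characterization (the max--min form giving $\lambda_k(A_n)\ge\lambda_k(A_{n-1})$ by restricting candidate subspaces to the coordinate hyperplane, the min--max form giving $\lambda_k(A_{n-1})\ge\lambda_{k+1}(A_n)$ because the $(n-k)$-dimensional subspaces of $\mathbb{C}^{n-1}$ form a subfamily of those of $\mathbb{C}^{n}$), and parts (b), (c) by passing to the Gram matrices $A_{p,n}A_{p,n}^{*}$ and $A_{p,n}^{*}A_{p,n}$ and taking square roots. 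The bookkeeping you flag in (c) is exactly the point that needs the hypothesis $p<n$: both $A_{p,n}^{*}A_{p,n}$ and $A_{p,(n-1)}^{*}A_{p,(n-1)}$ carry trailing zero eigenvalues, and one must check that the first $2p-1$ entries of the eigenvalue interlacing chain are precisely the squared singular values appearing in the statement; you do this correctly. The only thing the citation-based route buys the paper is brevity; your version buys self-containedness at essentially no cost, and is the same proof one finds in the cited reference.
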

\begin{proof} See for instance \cite [Lemma 22]{Tao:2012ix}.
\end{proof}
\begin{theorem}[Perturbation of eigenvalues and eigenvectors] \label{thm:pertubeigs} Let $A$ be a Hermitian matrix and $\mathbf v$ be a unit vector. Let $\zeta =\langle \mathbf v, A\mathbf v\rangle$ and $\epsilon = ||(A-\zeta)\mathbf v||$.
\begin{enumerate}[\normalfont (a)]
\item There exists an eigenvalue $\lambda_{\epsilon}$ of $A$ in the closed ball $\overline{ B(\zeta, \epsilon)}$. 
\item If $\lambda_{\epsilon}$ is the only eigenvalue in $\overline{B(\zeta, \epsilon)}$ with corresponding eigenvector $\mathbf v_{\epsilon}$, and all other eigenvalues are at distance at least $d > \epsilon$ of $\zeta$ then $ \Vert  \mathbf v _{\epsilon} - P_{\mathbf v}(\mathbf v_{\epsilon})\Vert \le \frac{2\epsilon}{d-\epsilon}$. 
\end{enumerate}
\end{theorem}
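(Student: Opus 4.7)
The plan is to pass to the spectral basis of $A$ and reduce both statements to an elementary $\ell^{2}$ identity. By the spectral theorem, write $A = \sum_i \lambda_i \mathbf u_i \mathbf u_i^{*}$ with $\{\mathbf u_i\}$ an orthonormal eigenbasis, expand $\mathbf v = \sum_i c_i \mathbf u_i$ with $\sum_i |c_i|^{2} = 1$, and note that $\zeta = \sum_i \lambda_i |c_i|^{2} \in \mathbb R$. Since $A - \zeta I$ is Hermitian, the defining quantity becomes
\begin{equation*}
\epsilon^{2} = \|(A - \zeta I)\mathbf v\|^{2} = \sum_{i}(\lambda_i - \zeta)^{2}|c_i|^{2},
\end{equation*}
and this single identity drives both parts.

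For part (a), I would argue by contradiction: if no eigenvalue of $A$ lay in $\overline{B(\zeta,\epsilon)}$, then each $(\lambda_i - \zeta)^{2} > \epsilon^{2}$, and the identity would force $\epsilon^{2} > \epsilon^{2}\sum_{i}|c_i|^{2} = \epsilon^{2}$, which is impossible.

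For part (b), relabel so that $\mathbf u_1 = \mathbf v_\epsilon$ and set $c_\epsilon := c_1 = \langle \mathbf v_{\epsilon}, \mathbf v\rangle$. Isolating the $i=1$ term and using $|\lambda_i - \zeta| \ge d$ for $i \ne 1$ gives
\begin{equation*}
\epsilon^{2} \;\ge\; d^{2}\sum_{i\ne 1}|c_i|^{2} \;=\; d^{2}\bigl(1 - |c_\epsilon|^{2}\bigr),
\end{equation*}
so $|c_\epsilon|^{2} \ge 1 - \epsilon^{2}/d^{2}$. Decomposing $\mathbf v = c_\epsilon \mathbf v_\epsilon + \mathbf w$ with $\mathbf w \perp \mathbf v_\epsilon$ and noting that $P_{\mathbf v}(\mathbf v_\epsilon) = \overline{c_\epsilon}\,\mathbf v$, a direct expansion yields $\|\mathbf v_\epsilon - P_{\mathbf v}(\mathbf v_\epsilon)\|^{2} = (1-|c_\epsilon|^2)^2 + |c_\epsilon|^2\|\mathbf w\|^2 = 1 - |c_\epsilon|^{2} \le \epsilon^{2}/d^{2}$. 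The stated bound $2\epsilon/(d-\epsilon)$ then follows as a weakening, since $\epsilon/d \le 2\epsilon/(d-\epsilon)$ whenever $0 < \epsilon < d$.

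I do not anticipate any substantive obstacle: both parts are one-line consequences of the spectral-basis identity once it is written down. The only mild subtlety is the implicit assumption that $\lambda_\epsilon$ is simple (or, equivalently, that $\mathbf v_\epsilon$ is chosen to be the unit vector inside the $\lambda_\epsilon$-eigenspace maximizing $|\langle \mathbf v_\epsilon, \mathbf v\rangle|$), which ensures that $|c_\epsilon|^{2}$ captures all the mass $\mathbf v$ places on that eigenspace; without this, only the component of $\mathbf v_\epsilon$ transverse to the eigenspace can be controlled.
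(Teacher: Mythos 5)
Your proof is correct, and it is worth noting that the paper itself does not prove this statement at all --- it simply cites Bhatia and \cite[Proposition A.1]{BenaychGeorges:2014ki} --- so your self-contained spectral-basis argument is a genuine addition rather than a reproduction. The key identity $\epsilon^{2}=\sum_i(\lambda_i-\zeta)^2|c_i|^2$ does deliver both parts: part (a) is the contradiction you describe, and in part (b) the computation $\|\mathbf v_\epsilon - P_{\mathbf v}(\mathbf v_\epsilon)\|^2 = 1-|c_\epsilon|^2 \le \epsilon^2/d^2$ checks out (the cross terms vanish because $\mathbf w\perp\mathbf v_\epsilon$), and since $\epsilon/d\le 2\epsilon/(d-\epsilon)$ for $0\le\epsilon<d$ you actually obtain a \emph{sharper} bound than the one stated; the $2\epsilon/(d-\epsilon)$ form in the literature arises from a resolvent-style estimate of the off-eigenspace component rather than from the exact $\ell^2$ bookkeeping you use. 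Your caveat about multiplicity is the right one and is genuinely needed: if $\lambda_\epsilon$ were degenerate, the terms $i\ne 1$ lying in the same eigenspace would not satisfy $|\lambda_i-\zeta|\ge d$, and the statement's phrase ``the only eigenvalue in $\overline{B(\zeta,\epsilon)}$ with corresponding eigenvector $\mathbf v_\epsilon$'' is most naturally read as asserting simplicity (and in the paper's application the relevant eigenvalues are simple with probability tending to one), so your resolution --- either assume simplicity or take $\mathbf v_\epsilon$ to be the normalized projection of $\mathbf v$ onto the eigenspace --- closes the only gap.
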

\begin{proof} A proof can be found in \cite[page 77]{Bhatia:1997tf} or \cite[Proposition A.1]{BenaychGeorges:2014ki}.
\end{proof}
\subsection{Convergence of ESD} In this subsection, we state the convergence of the corresponding empirical spectral measures. We assume $\mu>0$ for the next proposition.
\begin{prop}\label{prop:samplecovariance} Suppose $\alpha > 2$, $\mu \in (0,1]$ and $x$ has variance one. Let $\Sigma_{p,n}$ be the sparse heavy tailed sample covariance matrix. Then the empirical spectral distribution of $\Sigma_{p,n}/n^{\mu}$ converges almost surely to the Marchenko-Pastur law with density
\[
 \frac{\sqrt{(\lambda_+-x)(x-\lambda_-)}}{2 \pi\rho x }\mathbf 1_{[\lambda_-,\lambda_+]}(x)
\]
with $\lambda_{\pm} = (1\pm \sqrt{\rho})^2$.
\end{prop}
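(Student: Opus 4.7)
My plan is the classical truncate-and-apply-Marchenko--Pastur strategy adapted to our (non-identically-distributed in $n$) triangular array. First rescale: set $Y_{p,n}\mathdef n^{(1-\mu)/2}M_{p,n}$, so that each entry of $Y_{p,n}$ has (asymptotically) unit variance (since $\E m_{ij}^{2}=n^{\mu-1}$), and note $\Sigma_{p,n}/n^{\mu}=\tfrac{1}{n}Y_{p,n}Y_{p,n}^{*}$. The target is then the standard MP law with parameter $\rho$. Since $\alpha>2$, we may fix $\gamma$ with $\mu/\alpha<\gamma<\mu/2$; this window is exactly the place where the hypothesis $\alpha>2$ enters.

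Step 1 (Truncation). Let $\hat m_{ij}=m_{ij}\mathbf 1_{\{|m_{ij}|\le n^{\gamma}\}}$, $\hat M_{p,n}=[\hat m_{ij}]$, $\hat\Sigma_{p,n}=\hat M_{p,n}\hat M_{p,n}^{*}$. The number of truncated entries is binomial with mean $\slvar pn\cdot n^{\mu-1-\alpha\gamma}=o(n)$ because $\alpha\gamma>\mu$; a Chernoff bound of the type of Lemma \ref{lem:chernoff} shows this count is $o(n)$ w.e.h.p., so $\operatorname{rank}(M_{p,n}-\hat M_{p,n})=o(n)$ a.s.\ The classical rank inequality for empirical spectral distributions of Gram-type matrices then yields $\|F^{\Sigma_{p,n}/n^{\mu}}-F^{\hat\Sigma_{p,n}/n^{\mu}}\|_{\infty}\to 0$ a.s. Passing from $\hat m_{ij}$ to $\tilde m_{ij}\mathdef \hat m_{ij}-\E\hat m_{ij}$ is a rank-one perturbation of $\hat M_{p,n}$, hence does not change the limiting ESD either.

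Step 2 (MP theorem for a Lindeberg array). The variables $\tilde y_{ij}\mathdef n^{(1-\mu)/2}\tilde m_{ij}$ are i.i.d.\ for each $n$, mean zero, and have $\Var(\tilde y_{ij})\to 1$ because $\E \hat m_{ij}^{2}/n^{\mu-1}=\E x^{2}\mathbf 1_{\{|x|\le n^{\gamma}\}}\to 1$ under $\E x^{2}<\infty$. They are uniformly bounded by $2n^{\gamma+(1-\mu)/2}=o(\sqrt{n})$ thanks to $\gamma<\mu/2$, so the Lindeberg condition for the MP law holds trivially. The standard Marchenko--Pastur theorem for triangular arrays (Yin's formulation, or the Bai--Silverstein Lindeberg version) then shows that the ESD of $\tfrac{1}{n}\tilde Y_{p,n}\tilde Y_{p,n}^{*}$ converges a.s.\ to the MP law with parameter $\rho$ and unit scale, which combined with Step 1 is the claim.

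The only delicate point is the choice of the truncation level $\gamma$: the rank-perturbation step demands $\gamma>\mu/\alpha$ (so that $o(n)$ entries are removed), while the Lindeberg step demands $\gamma<\mu/2$ (so that truncated entries fit below $\sqrt{n}$ after rescaling). The simultaneous feasibility $\mu/\alpha<\mu/2$ is precisely the hypothesis $\alpha>2$, and this compatibility is the main substantive obstacle; everything else is bookkeeping on top of standard sample-covariance tools.
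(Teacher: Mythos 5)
Your proof is correct and follows essentially the same route as the paper, which simply normalizes the entries by $n^{\mu/2}$ to get variance $1/n$ and invokes the ``classic truncation and moment method'' (citing \cite[Exercise 2.1.18]{Anderson:2010up}); you have supplied the bookkeeping the paper delegates to that reference, namely the truncation window $\mu/\alpha<\gamma<\mu/2$ (where $\alpha>2$ enters), the rank inequality, and the Lindeberg verification. No gaps.
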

\begin{proof}[Proof of Proposition \ref{prop:samplecovariance}] The proof  follows from the classic truncation and moment method for random matrices (See for instance \cite[Exercise 2.1.18]{Anderson:2010up}). Normalizing the entries $m_{ij}$ by $n^{\mu/2}$ gives the desired variance:
\[
\Var (m_{ij}/n^{\mu/2}) = n^{-\mu}\Var(m_{ij}) = n^{-\mu} \cdot n^{\mu-1} = \frac{1}{n}.
\]
\end{proof}

\section{Proof of the main theorems}\label{sec4}
\subsection{Proof of Theorem \ref{thm:mainresult3} } 
\subsubsection{Proof strategy} 

We will use the strategy that was first proposed by Soshnikov  \cite{Soshnikov:2004uc} when proving the heavy tailed Hermitian matrix case with $0<\alpha < 2$. This idea was later developed in \cite{Auffinger:2009vs} for proving the Hermitian case and the sample covariance matrix case when $0 < \alpha < 4$ and used in \cite{BenaychGeorges:2014ki} for proving the band Hermitian matrix case when  $\alpha > 0$. 

The strategy is as follows. We first show that the convergence holds when $l =1$, i.e., $\frac{\lambda_{1}(\Sigma_{p,n})}{|m_{i_{1}j_{1}}|^2}\stackrel{P}{\rightarrow} 1$. Then, we remove the $i_{1}$-th row from $M_{p,n}$. Lemma \ref{lem:keylemma1} guarantees that, with high probability, the second largest entry will not be removed. The convergence for the second largest eigenvalue and the second largest entry follows from Theorem~\ref{thm:cauchy} and the same argument for the $l=1$ case. Iterating this process, one proves $\frac{\lambda_{l}(\Sigma_{p,n})}{|m_{i_{l}j_{l}}|^2}\stackrel{P}{\rightarrow} 1$ for each $l$ fixed.

\subsubsection{Eigenvalues}
We begin by computing the two-sided tail probability of $|m_{ij}|^{2}$. For any $t >0$,
\begin{align*}
\Pro(|m_{ij}|^{2} \ge  t) &= \Pro(|m_{ij}| > \sqrt{t}) = L(\sqrt{t})n^{\mu-1} (\sqrt{t})^{-\alpha}=n^{\mu-1}L(\sqrt{t})t^{-\alpha/2}.
\end{align*}
Since $L(\sqrt{t})$ is also a slowly varying function in $t$,  $[|m_{ij}|^{2}]_{i,j=1}^{p,n}$ is a sparse heavy tailed random matrix of $p\times n$ independent entries with parameter $\mu$ and $\alpha/2$. Classic extreme value theory tells us that the random point process $\mathcal Q_{n}$, defined in (\ref{eqn:randppcov1}), converges to the desired Poisson point process with intensity $\alpha/2x^{1+\alpha/2}$. In particular, $c_{np}^{-2}|m_{i_{1}j_{1}}|^{2}$ converges to a Frech\'et distribution with parameter $\alpha/2$. \Br
We next show that the largest eigenvalue of $\Sigma_{p,n}$ behaves like the square of the largest entry of $M_{p,n}$ ($l=1$ case), i.e., $\frac{\lambda_{1}(\Sigma_{p,n})}{|m_{i_{1}j_{1}}|^{2}} \stackrel{P}{\longrightarrow} 1$. 
Since $\Sigma_{p,n}$ is positive semidefinite, $\lambda_{1}(\Sigma_{p,n}) \ge \langle \Sigma_{p,n} \mathbf v, \mathbf v\rangle = \mathbf v^{*}M_{p,n}M_{p,n}^{*}\mathbf v$ for any unit vector $\mathbf v$.  Hence, we can choose $\mathbf v=\mathbf e_{i_{1}} $, which gives
\[
\lambda_{1}(\Sigma_{p,n}) \ge  \mathbf v^{t}M_{p,n}M_{p,n}^{*}\mathbf v = \sum_{j=1}^{n}|m_{i_{1}j}|^{2} = |m_{i_{1}j_{1}}|^{2} + \sum_{j\ne j_{1},j=1}^{n}|m_{i_{1}j_{1}}|^{2}\ge |m_{i_{1}j_{1}}|^{2}, 
\]
and it suffices to prove the reverse direction, i.e., $\forall \epsilon > 0$, 
\begin{equation}
\label{eqn:sampcovub}
\Pro(\lambda_{1}(\Sigma_{p,n})> |m_{i_{1}j_{1}}|^{2} (1+\epsilon)) \to 0,\quad \text{as } n \to \infty.
\end{equation}
We use the infinity norm of $\Sigma_{p,n}$ to bound $\lambda_{1}(\Sigma_{p,n})$ and truncate the matrix $M_{p,n}$, when necessary.  \br
\textit{$\bullet$ Case I: {$0 < \alpha < 1+\mu^{-1}$}}.\\
In this case, we can directly show (\ref{eqn:sampcovub}). Observing that
\begin{align*}
\lambda_{1}(\Sigma_{p,n})  = ||\Sigma_{p,n}|| = ||M_{p,n}M_{p,n}^{*}|| \le ||M_{p,n}||^{2} \le  ||M_{p,n}||_{\infty} ||M_{p,n}||_{1} 
\end{align*}
it suffices to show that with probability tending to one,
\begin{align} 
\label{eq:double1}
||M_{p,n}||_{\infty} \le |m_{i_{1}j_{1}}| (1+o(1)),\\
\label{eq:double2}
||M_{p,n}||_{1} \le |m_{i_{1}j_{1}}| (1+o(1)).
\end{align}
The proof of \eqref{eq:double2} will be almost identical to \eqref{eq:double1}, by switching the role of $p$ and $n$. We hence show \eqref{eq:double1} only.  
Lemma \ref{lem:keylemma1} \eqref{lemma3.1a} says, with probability going to 1, there is no row that has two entries with absolute value greater than $c_{np}^{\kappa}$, where $\kappa = \frac{1+2\mu}{2+2\mu} + \delta$, and $\delta > 0$ can be chosen arbitrarily small. Consider the following summation and break it into three pieces, 
\begin{align*}
&\tilde S_{i} \mathdef \sum_{j=1}^{n}|m_{ij}|\mathbf 1_{\{|m_{ij}| \le c_{np}^{\kappa}\}}\\
& = \sum_{j=1}^{n}|m_{ij}|\mathbf 1_{\{|m_{ij}| \le n^{\frac{\mu}{\alpha} -\eta}\}} +  \sum_{j=1}^{n}|m_{ij}|\mathbf 1_{\{n^{\frac{\mu}{\alpha} - \eta} < |m_{ij}| \le  n^{\frac{\mu}{\alpha}  + \eta}\}} +  \sum_{j=1}^{n}|m_{ij}|\mathbf 1_{\{ n^{\frac{\mu}{\alpha}+\eta} < |m_{ij}| \le c_{np}^{\kappa}\}}\\
&=\tilde S_{i,1} + \tilde S_{i, 2} + \tilde S_{i, 3},
\end{align*}
where we choose $\eta \in(0, \min\{\frac{1}{2\alpha(\alpha +1)}, \frac{\mu}{\alpha}\})$.\\ 
By Lemma \ref{lem:keylemma2} \eqref{lemma3.2a}, w.e.h.p., for any $\epsilon > 0$,
$
\tilde S_{i,1} \le n^{\mu + (\frac{\mu}{\alpha}-\eta)(1-\alpha)^{+} + \epsilon}$,
which is $o(n^{\frac{\mu+1}{\alpha}} )$. To see this, if $\alpha < 1$, set $\epsilon =\eta(1-\alpha) > 0$, and w.e.h.p.,
\[
\tilde S_{i,1} \le n^{\mu + (\frac{\mu}{\alpha}-\eta)(1-\alpha)+ \epsilon} = n^{\frac{\mu}{\alpha}} = o(n^{\frac{\mu+1}{\alpha}} ). 
\]
If $1\le \alpha < 1+\mu^{-1}$, then $(1-\alpha)^{+} = 0$ and hence
$
\tilde S_{i,1} \le n^{\mu + \epsilon} 
$. 
But $\alpha < 1+\mu^{-1}$ implies $ \mu < \frac{\mu+1}{\alpha}$, so for $\epsilon$ sufficiently small, $\tilde S_{i, 1} \le n^{\mu+\epsilon} = o(n^{\frac{\mu+1}{\alpha}} )$. \\
By Lemma \ref{lem:keylemma2} \eqref{lemma3.2c}, w.e.h.p.,
$
\tilde S_{i,2} \le n^{\mu/\alpha + \epsilon}$, $ \forall \epsilon > \eta(\alpha+1)$.  Since $\eta < \frac{1}{2\alpha(\alpha+1)}$, we can choose $\epsilon = \frac{1}{2\alpha}$, and this gives w.e.h.p., $\tilde S_{i, 2} \le n^{\frac{\mu}{\alpha}+\frac{1}{2\alpha}}=o(n^{\frac{\mu+1}{\alpha}})$, as desired. \\
Finally, since $\frac{1}{2}+\delta \le \kappa \le \frac{3}{4}+\delta<1, c_{np}\slvar n^{\frac{\mu+1}{\alpha}}$, and by Lemma \ref{lem:keylemma2} \eqref{lemma3.2c}, we can choose $\frac{(\mu+1)\kappa}{\alpha}< \gamma < \frac{\mu+1}{\alpha}$ such that $\tilde S_{i,3} \le n^{\gamma} = o(n^{\frac{\mu+1}{\alpha}})$. Hence,  w.e.h.p., $\tilde S_{i} = o(n^{\frac{\mu+1}{\alpha}})$. \\
The sum of absolute values in row $i$ of $M_{p,n}$ can be written as
\[
S_{i} \mathdef \sum_{j=1}^{n}|m_{ij}| = \tilde S_{i} + \sum_{j=1}^{n}|m_{ij}|\mathbf 1_{\{|m_{ij}| \ge c_{np}^{\kappa}\}}.
\]
Moreover, a crude union bound and Lemma \ref{lem:keylemma1}\eqref{lemma3.1b} give us 
\begin{align*}
\Pro \left(\exists i, \max_{1\le j \le n}|m_{ij}| \ge c_{np}^{\kappa}, S_{i}-\max_{1\le j \le n}|m_{ij}|  > c_{np}^{1-\epsilon_{0}}\right) \to 0
\end{align*}
for some $\epsilon_{0}>0$ sufficiently small, which implies \eqref{eq:double1}. Hence we have proved that $$\frac{\lambda_{1}(\Sigma_{p,n})}{|m_{i_{1}j_{1}}|^{2}} \stackrel{P}{\longrightarrow}1.$$
Next, we show that, with probability tending to one, $\Sigma_{p,n}$ has eigenvalues at $|m_{i_{l}j_{l}}|^{2}(1+o(1))$. We compute the $l$-th residual $\mathbf r_{l}$, for $l \ge 1$, i.e., 
\begin{align}\label{Sunnyoutsidebutpreferhere}
\Sigma_{p,n} \mathbf e_{i_{l}} = |m_{i_{l}j_{l}}|^{2} \mathbf e_{i_{l}} + \mathbf r_{l},
\end{align}
and hence
\[
\mathbf r_{l} = \left(\sum_{k=1}^{n}m_{1k} m_{i_{l}k},\ \ldots\ , \sum_{\substack{k=1\\k\ne j_{l}}}^{n}| m_{i_{l}k}|^{2}, \ldots,  \sum_{k=1}^{n}m_{pk} m_{i_{l}k}\right)^{T}.
\]
The norm of $\mathbf r_{l}$ is $o(c_{np}^{2})$. To see this, we compute the $||\mathbf r_{l}||$ explicitly:
\begin{align*}
||\mathbf r_{l}||_{2}&=\left(\sum_{\substack{s=1\\s\ne i_{l}}}^{p} \left[\sum_{k=1}^{n}m_{sk}m_{i_{l}k}\right]^{2} + \left[\sum_{\substack{k=1\\k\ne j_{l}}}^{n}| m_{i_{l}k}|^{2} \right]^{2} \right)^{1/2} \le  \sum_{k=1}^{n} |m_{i_{l}k}|\sum_{\substack{s=1\\s\ne i_{l}}}^{p} |m_{sk}|   + \sum_{\substack{k=1\\k\ne j_{l}}}^{n}| m_{i_{l}k}|^{2}\\
&\le \sum_{\substack{k=1\\k\ne j_{l}}}^{n} |m_{i_{l}k}|\sum_{\substack{s=1\\s\ne i_{l}}}^{p} |m_{sk}| + |m_{i_{l}j_{l}}| \sum_{\substack{s=1\\s\ne i_{l}}}^{p} |m_{sj_{l}}| + \left(\sum_{\substack{k=1\\k\ne j_{l}}}^{n}| m_{i_{l}k}|\right)^{2}.
\end{align*}
By Lemma \ref{lem:keylemma1} and \ref{lem:keylemma2} \eqref{lemma3.2a},  \eqref{lemma3.2c} and  \eqref{lemma3.2d}, and a calculation similar to that when we bound the row sum of $M_{p,n}$, one can see that $||\mathbf r_{l}||=o(c_{np}^{2})$, with probability tending to one. Hence, letting $\mathbf r_{l}' = c_{np}^{-2}\mathbf r_{l}$, we have
\[
c_{np}^{-2}\Sigma_{p,n} \mathbf e_{i_{l}} = c_{np}^{-2}|m_{i_{l}j_{l}}|^{2}\mathbf e_{i_{l}} + \mathbf r'_{l},
\]
where $||\mathbf r'_{l}|| \to 0$. It then follows from Lemma \ref{thm:pertubeigs} that $\Sigma_{p,n}$ has eigenvalues $|m_{i_{l}j_{l}}|^{2}(1+o(1))$.  Hence, with probability tending to one, $\lambda_{l}(\Sigma_{p,n}) \ge |m_{i_{l}j_{l}}|^{2}(1+o(1))$. \\
To show that these are exactly the largest eigenvalues (where the case $l=1$ is proved), we use Theorem \ref{thm:cauchy}. When $l=2$, let $M_{p,n, -i_{1}}$ be the submatrix of $M_{p,n}$ removing the $i_{1}$-th row and let $\Sigma_{p,n}^{(i_{1})} \mathdef M_{p,n, -i_{1}}M_{p,n, -i_{1}}^{*}$. 
By Lemma \ref{lem:keylemma1}\eqref{lemma3.1a}, with probability going to one, the second largest entry of $M_{p,n}$ (in absolute value), $m_{i_{2}j_{2}}$, will remain in $M_{p,n, -i_{1}}$. Using the infinity norm bound on $\Sigma_{p,n}^{(i_{1})}$ and the same argument as we prove for $\lambda_{1}(\Sigma_{p,n})$, we have, with probability tending to one, 
\[
\lambda_{2}(\Sigma_{p,n}) \le  \lambda_{1}(\Sigma_{p,n}^{(i_{1})})  = |m_{i_{2}j_{2}}|^{2}(1+o(1)), 
\]
where the first inequality is due to the interlacing of eigenvalues.
The claim for general $\lambda_{l}(\Sigma_{p,n})$ then follows from iterating the above argument.\Br
\textit{$\bullet$ Case II: {$1+\mu^{-1} \le  \alpha < 2(1+\mu^{-1})$}}.\\
 For this case,  in order to show \eqref{eqn:sampcovub}, we choose $\gamma, \gamma' > 0$ such that 
 \[
0\le  \frac{\mu}{\alpha}-\frac{1}{\alpha(\alpha-1)} < \gamma < \frac{\mu+1}{\alpha}, \quad \max\left(\gamma, \frac{\mu}{2}\right) < \gamma' < \frac{\mu+1}{\alpha},
 \]
 which is always possible if $1+\mu^{-1} \le \alpha < 2(1+\mu^{-1})$.  We truncate the entries of  $M_{p,n}$ at $n^{\gamma}$. Let $\hat M_{p,n} \mathdef [\hat m_{ij}]_{i,j=1}^{p,n} = [m_{ij}\mathbf 1_{\{|m_{ij}| \le n^{\gamma}\}}]_{i,j=1}^{p,n}$, and $M_{p,n}' = M_{p,n} - \hat M_{p,n}$ be the truncated part and the remaining part of $M_{p,n}$, respectively. 

We decompose $\Sigma_{p,n}$ as below:
\begin{align*}
\Sigma_{p,n} &= (\hat M_{p,n} + M_{p,n}')(\hat M_{p,n}+M_{p,n}')^{*} \\
 \notag &= (\hat M_{p,n}\hat M_{p,n}^{*}) + (M_{p,n}'\hat M_{p,n}^{*} + \hat M_{p,n}M_{p,n}'^{*} + M_{p,n}'M_{p,n}'^{*}) \\
\notag &\mathdef \hat \Sigma_{p,n} + \Sigma_{p,n}'.
\end{align*}
Using triangular inequality, we have
\begin{align*}
\lambda_{1}(\Sigma_{p,n}) &= ||\Sigma_{n,p}|| \le ||\hat M_{p,n} + M_{p,n}'||^{2} \le [||\hat M_{p,n}|| + ||M'_{p,n}||]^{2} \\
& \le  ||\hat \Sigma_{p,n}|| + 2  || \hat \Sigma_{p,n}||^{1/2} (||M_{p,n}'||_{1}||M_{p,n}'||_{\infty})^{1/2} + ||M_{p,n}'||_{1}||M_{p,n}'||_{\infty}.
\end{align*}
Hence, we will prove, with probability tending to one,
\begin{align}
\label{tbproved1}
||\hat \Sigma_{p,n}|| &=o(c_{np}^{2})\\
\label{tbproved2}
||M_{p,n}'||_{\infty} \le |m_{i_{1}j_{1}}| (1+o(1)), &\quad ||M_{p,n}'||_{1} \le |m_{i_{1}j_{1}}| (1+o(1))
\end{align}
which gives \eqref{eqn:sampcovub}.
For \eqref{tbproved1}, first, one can deduce from the lower bound of $\gamma$ that $\mu + \gamma(1-\alpha) < \frac{\mu+1}{\alpha}$, and hence,
\begin{align*}
\left | ||\hat M_{p,n} || - || \hat M_{p,n} - \E \hat m_{ij}|| \right| \le \sqrt{np}\, \E \hat m_{ij} \le CL(n^{\gamma}) n^{\mu + \gamma(1-\alpha)} = o(n^{\frac{\mu+1}{\alpha}}) = o(c_{np}).
\end{align*}
So we may assume that the truncated entries are centered. Here, the first inequality is a consequence of \cite[Theorem A.46]{Bai:2009kn} and the second inequality is due to \cite[Lemma 13]{Auffinger:2009vs}. Theorem \ref{thm:traceboundcov} indicates that
\[
\Pro(||\hat \Sigma_{p,n}|| \ge C n^{2\gamma'}) \to 0, 
\]
Now with $\gamma'$ chosen such that $\gamma' < \frac{\mu+1}{\alpha}$, \eqref{tbproved1} holds with probability tending to one.\br 
For \eqref{tbproved2}, again, we only show the upper bound for the infinity norm. As in the previous case, it is enough to show that for any  $1\le i \le n$ fixed, w.e.h.p.,
\[
\tilde S_{i} \mathdef \sum_{j=1}^{n}|m_{ij}|\mathbf 1_{\{n^{\gamma} < |m_{ij}| \le c_{np}^{\kappa}\}} = o(n^{\frac{\mu+1}{\alpha}}).
\]
We treat $\tilde S_{i}$ similarly:
\begin{align*}
\tilde S_{i}  &=  \sum_{j=1}^{n}|m_{ij}|\mathbf 1_{\{n^{\gamma} < |m_{ij}| \le n^{\frac{\mu+1}{\alpha} - \eta}\}} + \sum_{j=1}^{n}|m_{ij}|\mathbf 1_{\{ n^{\frac{\mu+1}{\alpha} - \eta} < |m_{ij}| \le n^{\frac{\mu+1}{\alpha} + \eta}\}  } \\
\qquad &+ \sum_{j=1}^{n}|m_{ij}|\mathbf 1_{\{ n^{\frac{\mu+1}{\alpha} + \eta} \le |m_{ij}| <  c_{np}^{\kappa} \} } \mathdef \tilde S_{i, 1} + \tilde S_{i, 2} + \tilde S_{i, 3}.
\end{align*}
Here, the only difference from the previous case is the $\tilde S_{i,1}$ term. By Lemma \ref{lem:keylemma2} \eqref{lemma3.2b}, for arbitrarily small $\delta > 0$, w.e.h.p.,
$
\tilde S_{i, 1} \le n^{\mu-\gamma(\alpha-1) + \delta}
$. 
The choice of  
$
\gamma > \frac{\mu}{\alpha} - \frac{1}{\alpha(\alpha-1)}
$ guarantees $\tilde S_{i, 1}= o( n^{\frac{\mu+1}{\alpha}})$.\Br
The part applying Cauchy interlacing theorem is identical to Case I, so we remain to show that with probability going to one,  the norm of $\mathbf r_{l}$, as defined in \eqref{Sunnyoutsidebutpreferhere} is of smaller order with respect to $c_{np}^{2}$, as $n \to \infty$. We estimate $||\mathbf r_{l}||$ using the triangular inequality and the decomposition of $\Sigma_{p,n}$ above:
\begin{align*}
||\mathbf r_{l}|| &\le ||\hat \Sigma_{p,n}|| + ||M_{p,n}'\hat M_{p,n}^{*}\mathbf e_{i_{l}}|| +||\hat M_{p,n} M_{p,n}'^{*}\mathbf e_{i_{l}}|| + ||M_{p,n}'M_{p,n}'^{*}\mathbf e_{i_{l}} - |m_{i_{l}j_{l}} |^{2}\mathbf e_{i_{l}} ||\\
&\le ||\hat \Sigma_{p,n}|| + 2  ||\hat \Sigma_{p,n}||^{1/2} (||M_{p,n}'||_{1}||M_{p,n}'||_{\infty})^{1/2}  +  ||M_{p,n}'M_{p,n}'^*\mathbf e_{i_{l}} - |m_{i_{l}j_{l}} |^{2}\mathbf e_{i_{l}} ||.
\end{align*}
In view of \eqref{tbproved1} and \eqref{tbproved2}, 
we remain to show that with probability going to one,
\begin{equation}
\label{toprove3}
||M_{p,n}'M_{p,n}'^*\mathbf e_{i_{l}} - |m_{i_{l}j_{l}} |^{2}\mathbf e_{i_{l}} ||=o(c^{2}_{np}). 
\end{equation}
We compute the left side directly, which yields
\begin{align*}
||M_{p,n}'M_{p,n}'^*\mathbf e_{i_{l}} - |m_{i_{l}j_{l}} &|^{2}\mathbf e_{i_{l}} ||\le \sum_{\substack{k=1\\k\ne j_{l}}}^{n} |m_{i_{l}k}|\mathbf 1_{\{|m_{i_{l}k}|>n^{\gamma}\}}\sum_{\substack{s=1\\s\ne i_{l}}}^{p} |m_{sk}|\mathbf 1_{\{|m_{sk}|>n^{\gamma}\}} \\
&+ |m_{i_{l}j_{l}}| \sum_{\substack{s=1\\s\ne i_{l}}}^{p} |m_{sj_{l}}|\mathbf 1_{\{|m_{sj_{l}}|>n^{\gamma}\}} + \left(\sum_{\substack{k=1\\k\ne j_{l}}}^{n}| m_{i_{l}k}|\mathbf 1_{\{|m_{i_{l}k}|>n^{\gamma}\}}\right)^{2}. 
\end{align*}
Using Lemma \ref{lem:keylemma1} and \ref{lem:keylemma2} \eqref{lemma3.2b}, \eqref{lemma3.2c} and \eqref{lemma3.2d}, we see that each summation above is $o(c_{np})$ with probability tending to one and hence \eqref{toprove3} is proved.  We conclude that $||\mathbf r_{l}||=o(c_{np}^{2})$ with probability tending to one and the proof is complete.
\subsubsection{Eigenvectors} $ $\\
We consider the matrix $c_{np}^{-2}\Sigma_{p,n}$. Let $\mathbf v = \mathbf e_{i_{l}}$ and $\zeta = \langle \mathbf v, c_{np}^{-2}\Sigma_{p,n}\mathbf v\rangle = c_{np}^{-2}\sum_{j=1}^{n}|m_{i_{l}j}|^{2}$. Then
\begin{align*}
\epsilon &= c_{np}^{-2}||(\Sigma_{p,n}-\zeta)\mathbf v|| \le c_{np}^{-2}||(\Sigma_{p,n}-|m_{i_{l}j_{l}}|^{2})\mathbf e_{i_{l}}|| + c_{np}^{-2}||(\zeta - |m_{i_{l}j_{l}}|^{2}) \mathbf e_{i_{l}}|||\\
&\le c_{np}^{-2}||\mathbf r_{l}|| + c_{np}^{-2}\sum_{j=1,j\ne j_{l}}^{n}|m_{i_{l}j}|^{2}.
\end{align*}
By Lemma \ref{lem:keylemma1} and \ref{lem:keylemma2}, we know $\epsilon \stackrel{P}{\rightarrow}0$. Hence, in order to use Theorem \ref{thm:pertubeigs}, it suffices to show that for sufficiently small $\delta > 0$, with probability tending to one, $\lambda_{l}$ is the only eigenvalue in $\overline{B(\zeta, \delta)}$, i.e., each $k \ge 1$, the spacing of the eigenvalues satisfies
\[
\lim_{\delta\to 0}\limsup_{n\to\infty}\Pro\left(\frac{\lambda_{k}(\Sigma_{p,n}) - \lambda_{k+1}(\Sigma_{p,n})}{c_{np}^{2}} < \delta \right) = 0. 
\]
Since we have proved $\lambda_{k}(\Sigma_{p,n})/|m_{i_{k}j_{k}}|^{2}\stackrel{P}{\rightarrow} 1$, this is equivalent to
\[
\lim_{\delta\to 0}\limsup_{n\to\infty}\Pro\left(c_{np}^{-2}|m_{i_{k}j_{k}}|^{2} -c_{np}^{-2}|m_{i_{k+1}j_{k+1}}|^{2} < \delta \right) = 0.
\]
However, this follows from the fact that $\mathcal Q_{n} =\sum_{i=1}^{p}\sum_{j=1}^{n} \delta_{c_{np}^{-2}|m_{ij}|^{2}}$ converges to a Poisson point process on $(0, +\infty)$.
\subsection{Proof of Theorem \ref{thm:mainresult4} } 
\subsubsection{Eigenvalues}\label{sec:lastpart}

Proposition \ref{prop:samplecovariance} implies that for any $k\ge 1$ fixed, and any $\epsilon > 0$
\[
\Pro(\lambda_{k}(\Sigma_{p,n}) \ge (1+\sqrt{\rho})^2(1-\epsilon)n^{\mu}) \to 1, \quad \text{as } n\to\infty.
\]
It remains to prove the upper bound, i.e., $\Pro(\lambda_{k}(\Sigma_{p,n}) \leq (1+\sqrt{\rho})^2(1+\epsilon)n^{\mu}) \to 1$. Since $\lambda_{k}(\Sigma_{p,n}) \le \lambda_{1}(\Sigma_{p,n})$ and since we can decompose $\Sigma_{p, n}$ in the same way as in the previous case, it is enough to show that 
\begin{equation*}
\Pro\bigg( ||\hat \Sigma_{p,n}|| + 2  || \hat \Sigma_{p,n}||^{1/2} (||M_{p,n}'||_{1}||M_{p,n}'||_{\infty})^{1/2} + ||M_{p,n}'||_{1}||M_{p,n}'||_{\infty}  >  (1+\sqrt{\rho})^2(1+\epsilon)n^{\mu}\bigg)
\end{equation*} goes to zero.
In this regime, we choose $\gamma' = \mu/2$ and $\gamma\in(\frac{\mu}{2(\alpha-1)}, \frac{\mu}{2})$, which is always possible when $\alpha > 2$. Such $\gamma$ and $\gamma'$ satisfy the assumptions in Theorem \ref{thm:traceboundcov}, which gives the bound for the truncated part, i.e.,
\begin{equation*}
\Pro(||\hat \Sigma_{p,n}|| \ge (1+\sqrt{\rho})^2(1+\epsilon)n^{\mu}) \to 0.
\end{equation*}
We remain to show that 
$
||M_{p, n}'||_{1} = o(n^{\mu/2})
$ and $||M_{p, n}'||_{\infty} = o(n^{\mu/2})$ with probability tending to one. Again, we prove for the infinity norm only, i.e., with probability going to one,
\[
 S_{i} \mathdef \sum_{j=1}^{n}|m_{ij}|\mathbf 1_{\{|m_{ij}|> n^{\gamma}\}} = o(n^{\mu/2}), \quad \text{for all } 1\le i\le n.
\] 
Since $c_{np}\slvar n^{(1+\mu)/\alpha}$ and $c_{np}^{-2}|m_{i_{1}j_{1}}|^{2} $ converges in distribution,  then for any $\theta > \frac{\mu+1}{\alpha}$, with probability tending to one, 
$
\max_{1\le i,j \le n}|m_{ij}| \le n ^{\theta}
$. Hence, with probability tending to one, for all $1\le i\le n$, 
\begin{align*}
S_{i}&=\sum_{j=1}^{n}|m_{ij}|\mathbf 1_{\{n^{\gamma} < |m_{ij}|\le n^{\frac{\mu}{\alpha}}\}}+\sum_{j=1}^{n}|m_{ij}|\mathbf 1_{\{n^{\frac{\mu}{\alpha}} < |m_{ij}|\le n^{\frac{\mu+1}{\alpha}}\}}+\sum_{j=1}^{n}|m_{ij}|\mathbf 1_{\{n^{\frac{\mu+1}{\alpha}} < |m_{ij}|\le n^{\theta}\}}\\
 &\mathdef S_{i,1} + S_{i,2}+S_{i,3}.
 \end{align*}
By Lemma \ref{lem:keylemma2} \eqref{lemma3.2b}, \eqref{lemma3.2c} and \eqref{lemma3.2d}, respectively, for any $\epsilon > 0$, we have w.e.h.p., 
\begin{align*}
S_{i,1} &\le n^{\mu-\gamma(\alpha-1)+\epsilon}, \quad S_{i,2} \le n^{\frac{\mu+1}{\alpha}+\epsilon},\quad S_{i,3} \le n^{\theta+\epsilon}.
\end{align*}
As $\alpha > 2(1+\mu^{-1})$ we have $\frac{\mu+1}{\alpha} < \frac{\mu}{2}$. We can choose $\theta$ arbitrarily close to $\frac{\mu+1}{\alpha}$ and $\epsilon > 0$ small enough such that w.e.h.p., $S_{i,2} + S_{i,3} =o(n^{\mu/2})$. Moreover, the choice of $\gamma > \frac{\mu}{2(\alpha-1)}$ implies  $\mu-\gamma(\alpha-1) < \frac{\mu}{2}$. By making $\epsilon$ small enough, we get w.e.h.p., $S_{i,1} = o(n^{\mu/2})$. Therefore, with probability tending to one, for all $1\le i \le n$, $S_{i} = o(n^{\mu/2})$ hence $||M_{p,n}'||_{\infty} = o(n^{\mu/2})$. The proof is complete.

\subsubsection{Eigenvectors} Last, we prove the localization result of the eigenvectors of $\Sigma_n$. We use the following simple linear algebra lemma \cite[Lemma 4.2]{BenaychGeorges:2014ki} that we quote without proof. 

\begin{lem} \label{lem:linearalgebraeasylemma} Let $H$ be a Hermitian matrix and $\rho_L(H)$ be the maximum spectral radius of its $L \times L$ principal sub-matrix. Let $\lambda$ be an eigenvalue of $H$ and $\mathbf v$ an associated unit eigenvector.     
If $\mathbf v$ is $(L, \eta)$-localized, then  $$|\lambda| \leq \frac{\rho_L(H)+\sqrt{\eta} \|H \|}{\sqrt{1-\eta}}.$$
\end{lem}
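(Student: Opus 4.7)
The plan is to bound $|\lambda|$ by projecting the eigenvalue equation onto the localization set and exploiting that $\mathbf{v}$ carries almost all of its $\ell^2$ mass on $L$ coordinates.

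First, using the $(L,\eta)$-localization hypothesis, I would fix a set $S\subseteq\{1,\dots,n\}$ with $|S|=L$ and $\sum_{j\in S}|v_j|^2>1-\eta$. Let $P_S$ denote the coordinate projection onto $S$, write $\mathbf{v}=\mathbf{v}_S+\mathbf{v}_{S^c}$ with $\mathbf{v}_S=P_S\mathbf{v}$ and $\mathbf{v}_{S^c}=(I-P_S)\mathbf{v}$, and record the two basic norm estimates $\|\mathbf{v}_S\|_2\ge\sqrt{1-\eta}$ and $\|\mathbf{v}_{S^c}\|_2<\sqrt{\eta}$ that come straight from the hypothesis and the unit normalization of $\mathbf{v}$.

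Next, I would project the eigenvalue equation $H\mathbf{v}=\lambda\mathbf{v}$ onto the $S$-coordinates to obtain
$$\lambda\mathbf{v}_S=P_SH\mathbf{v}=P_SHP_S\mathbf{v}+P_SHP_{S^c}\mathbf{v}.$$
Taking $\ell^2$ norms and using the triangle inequality then gives
$$|\lambda|\,\|\mathbf{v}_S\|_2\le\|P_SHP_S\mathbf{v}\|_2+\|P_SHP_{S^c}\mathbf{v}\|_2\le\|P_SHP_S\|\,\|\mathbf{v}\|_2+\|H\|\,\|\mathbf{v}_{S^c}\|_2.$$
Since $P_SHP_S$ is (up to zero-padding) the Hermitian principal $L\times L$ sub-matrix indexed by $S$, its operator norm equals the spectral radius of that block, which is at most $\rho_L(H)$ by definition. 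Combined with $\|\mathbf{v}\|_2=1$ and $\|\mathbf{v}_{S^c}\|_2<\sqrt{\eta}$ this yields $|\lambda|\,\|\mathbf{v}_S\|_2\le\rho_L(H)+\sqrt{\eta}\,\|H\|$, and dividing by $\|\mathbf{v}_S\|_2\ge\sqrt{1-\eta}$ delivers the lemma.

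I do not anticipate any real obstacle here: the proof is essentially a triangle-inequality argument on the orthogonal decomposition $\mathbf{v}=\mathbf{v}_S+\mathbf{v}_{S^c}$ together with the definition of $\rho_L(H)$. The only mildly subtle point is the choice to bound the first piece by $\|P_SHP_S\|\,\|\mathbf{v}\|_2$ (using $\|\mathbf{v}\|_2=1$) rather than by $\|P_SHP_S\|\,\|\mathbf{v}_S\|_2$; the former is what produces the symmetric right-hand side $\tfrac{\rho_L(H)+\sqrt{\eta}\,\|H\|}{\sqrt{1-\eta}}$ stated in the lemma, while the latter would give the (slightly tighter but less clean) variant $\rho_L(H)+\tfrac{\sqrt{\eta}\,\|H\|}{\sqrt{1-\eta}}$.
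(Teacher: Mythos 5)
Your proof is correct: the decomposition $\mathbf v=\mathbf v_S+\mathbf v_{S^c}$, the projected eigenvalue equation, the identification of $\|P_SHP_S\|$ with the spectral radius of the Hermitian principal block (hence $\le\rho_L(H)$), and the final division by $\|\mathbf v_S\|_2\ge\sqrt{1-\eta}$ all go through. The paper itself quotes this lemma from \cite[Lemma 4.2]{BenaychGeorges:2014ki} without proof, and your triangle-inequality argument is the standard one for it, so there is nothing to compare against here.
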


If $\alpha > 2(1+\mu^{-1})$ then, by Theorem \ref{thm:traceboundcov}, we know that $\| \Sigma_{p, n} \|$ is of order $(1+\sqrt{\rho})^2 n^{\mu}$. In view of Lemma \ref{lem:linearalgebraeasylemma}, it suffices to show that there exists $\eta>0$ such that with probability going to one 
\begin{equation*}\label{eq:evlocal}
\rho_{\lfloor p^\beta \rfloor}(\Sigma_{p,n})<(\sqrt{1-\eta}-\sqrt{\eta})(1+\sqrt{\rho})^2n^{\mu}.
\end{equation*}
In other words, we must establish that  there exist $\epsilon >0$ such that with probability going to one any $\lfloor p^\beta \rfloor \times \lfloor p^\beta  \rfloor$ principal sub-matrix $W$ of $\Sigma_{n,p}$ satisfies  $\|W\| \leq (1+\epsilon)n^{\mu}$.

We proceed as follows. A principal sub-matrix $W$ is obtained by choosing $\lfloor p^\beta \rfloor$ rows $i_1, \ldots, i_{\lfloor p^\beta \rfloor}$ of the rectangular matrix $M_{p,n}$ and writing $W=M_IM^{*}_I$, where $I = \{i_1, \ldots, i_{\lfloor p^\beta \rfloor} \}$. Here, the notation $M_I$ stands for the $\lfloor p^\beta \rfloor \times n$ sub-matrix of $M$ formed by the rows with indices in $I$. As before, we write $\hat M$ and $M'$ for the truncation and remainder of the matrix $M$ at level $n^{\gamma}$, for $\gamma \in (\frac{\mu}{2(\alpha-1)}, \frac{\mu}{2})$. Then  $\|W\| \leq \| \hat M_I \hat M_I^{*} \| + 2   \| \hat M_I \hat M_I^{*} \|^{1/2} (||M_{I}'||_{1}||M_{I}'||_{\infty})^{1/2}+ \| M_I' M_I'^*\|$. \br
For any choice of $I$, $\| M_I' M_I'^* \| \leq \| M_{p,n}'\|_1 \|M_{p,n}'\|_\infty = o(n^{\mu})$ as in the proof in Section \ref{sec:lastpart}. On the other hand, for any choice of $I$, one can adapt the proof of Theorem \ref{thm:traceboundcov} to deal with the case of $\hat p = \lfloor p^\beta\rfloor$ rows to show that for any $1<c<(1+\sqrt{\rho})^{2},$ there exists $\theta=\theta(c)>0$ and $\gamma'=\gamma'(c)>0$ so that 
$$\Pro \left( \|\hat M_I \hat M_I^{*} \| \geq c n^{\mu}\right) \leq P(n)n^{-\theta\lfloor n^{\gamma'} \rfloor},$$
where $P$ is a polynomial in $n$. Indeed, in the case where $\hat p \to \infty$ and $\hat p/n \to 0$ one needs to control the appearance of odd and even innovations (or odd/even marked vertices as in \cite[Section 2.2]{Peche}). We leave the details to the reader. Since there are at most $n^{p^\beta}$ ways to choose the indices in $I$, the probability of the existence of such a principal sub-matrix is bounded above by:
$$ P(n)n^{-\theta \lfloor n^{\gamma'} \rfloor } n^{(2\rho n)^\beta}.$$
Thus if we choose $\beta<\gamma'$, we obtain the desired result.

\section{Hermitian Matrices }\label{sec:extension} In the last section, we derive one extension of the methods of Sections \ref{sec3} and \ref{sec4}. 
\subsection{Sparse Hermitian matrices with heavy tails. }  Recall that $x$ is a random variable with heavy tailed distribution and $y$ is a Bernoulli random variable, independent of $x$, with success probability $n^{\mu-1}$. Let $X_{n} = [x_{ij}]_{i,j}^n$ be an $n \times n$ Hermitian, random matrix where entries along and above the diagonal are i.i.d. copies of $x$ and $Y_{n}=[y_{ij}]_{i,j=1}^{n}$ be a real, symmetric matrix whose entries along and above the diagonal are i.i.d. copies of $y$.
Define 
\begin{equation*}
\label{eqn:defMn}
M_{n}=X_{n}\cdot Y_{n}=[m_{ij}]_{i,j=1}^{n}\mathdef [x_{ij}y_{ij}]_{i,j=1}^{n}.
\end{equation*} 
Since there are, on average, $\frac{n^{2}\cdot n^{\mu-1}}{2} = \frac{n^{\mu +1 }}{2}$ independent, nonzero entries in $M_{n}$, the right scaling factor for the largest entries of $M_{n}$ should be
\begin{equation*}
\label{eqn:cn}
c_{n} \mathdef  
\inf\left\{t: G_{\alpha}(t) \le \frac{2}{(n+1)n^{\mu}}\right\} \slvar n^{\frac{\mu+1}{\alpha}}.
\end{equation*}

Using the a similar argument as in Sections \ref{sec3} and \ref{sec4}, one can prove the different behavior for the eigenvalues and eigenvectors of $M_{n}$, determined by the tail exponent $\alpha$ and the sparsity exponent $\mu$. When $0<\alpha < 2(1+\mu^{-1})$, the largest eigenvalues of $M_{n}$  behave like its largest entries. In this case, $n^{\frac{\mu+1}{\alpha}} \gg n^{\frac{\mu}{2}}$, where $n^{\frac{\mu}{2}}$ is the order of the bulk of the spectrum. The eigenvectors are localized.

\begin{thm}\label{thm:mainresult1} Suppose $0 < \alpha < 2(1+\mu^{-1})$.  For $(1+\mu^{-1}) \le \alpha < 2(1+\mu^{-1})$, we also assume that $x$ is centered. Then for each $l\ge 1$,  as $n\to \infty$, we have
\begin{equation}
\label{eqn:eigsub}
\frac{\lambda_{l}(M_{n})}{|m_{i_{l}j_{l}}|} \stackrel{P}{\longrightarrow} 1,
\end{equation}
and the localization of the corresponding eigenvector, i.e.,
\begin{equation*}
\left \Vert \mathbf v_{l}(M_{n}) - \frac{1}{\sqrt{2}}(e^{i\theta_{l}(M_{n})/2}\mathbf e_{i_{l}} +  e^{-i\theta_{l}(M_{n})/2}\mathbf e_{j_{l}})\right \Vert_{2} \stackrel{P}{\longrightarrow} 0.
\end{equation*}
\end{thm}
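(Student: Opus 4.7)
The argument closely parallels the proof of Theorem \ref{thm:mainresult3}, with two structural differences coming from the Hermitian setting: each large off-diagonal entry $m_{i_{l}j_{l}}$ produces a pair of eigenvalues near $\pm|m_{i_{l}j_{l}}|$ coming from the $2\times 2$ principal minor indexed by $\{i_{l},j_{l}\}$, so the target eigenvector is a specific in-phase combination of $\mathbf e_{i_{l}}$ and $\mathbf e_{j_{l}}$; and the iteration step removes \emph{two} indices per pass. I treat $l=1$ and then iterate via Cauchy interlacing. Setting $\theta_{1}=\theta_{1}(M_{n})=\arg(m_{i_{1}j_{1}})$ and testing with $\mathbf v_{*}\mathdef \frac{1}{\sqrt{2}}(e^{i\theta_{1}/2}\mathbf e_{i_{1}}+e^{-i\theta_{1}/2}\mathbf e_{j_{1}})$, the Hermiticity identity $m_{j_{1}i_{1}}=\overline{m_{i_{1}j_{1}}}$ yields $\langle M_{n}\mathbf v_{*},\mathbf v_{*}\rangle=|m_{i_{1}j_{1}}|+\tfrac{1}{2}(m_{i_{1}i_{1}}+m_{j_{1}j_{1}})$. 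The diagonal entries $m_{i_{1}i_{1}}$ and $m_{j_{1}j_{1}}$ are single fixed-position entries of $M_{n}$ whose magnitudes are $o_{\mathbb P}(c_{n})$, giving the lower bound $\lambda_{1}(M_{n})\ge |m_{i_{1}j_{1}}|(1+o_{\mathbb P}(1))$.

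\textbf{Upper bound at $l=1$.} The Hermitian identity $\|M_{n}\|_{1}=\|M_{n}\|_{\infty}$ reduces $\|M_{n}\|\le\sqrt{\|M_{n}\|_{1}\|M_{n}\|_{\infty}}=\|M_{n}\|_{\infty}$ to controlling a single row sum. By Lemma \ref{lem:keylemma1} no row of $M_{n}$ contains two entries of magnitude exceeding $c_{n}^{\kappa}$, so the row containing $m_{i_{1}j_{1}}$ contributes $|m_{i_{1}j_{1}}|+\tilde S$, with $\tilde S$ summing the entries of that row of magnitude at most $c_{n}^{\kappa}$. In the sub-range $0<\alpha<1+\mu^{-1}$, splitting $\tilde S$ dyadically and applying Lemma \ref{lem:keylemma2}\eqref{lemma3.2a} and \eqref{lemma3.2c} exactly as in Case I of Section \ref{sec4} gives $\tilde S=o(c_{n})$. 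In the sub-range $1+\mu^{-1}\le\alpha<2(1+\mu^{-1})$, I truncate $M_{n}=\hat M_{n}+M_{n}'$ at level $n^{\gamma}$, bound $\|M_{n}\|\le\|\hat M_{n}\|+\|M_{n}'\|_{\infty}$, control $\|\hat M_{n}\|$ via the Hermitian analogue of Theorem \ref{thm:traceboundcov} (the trace expansion now runs over closed walks on a single vertex set rather than over bipartite return pairs), and handle $\|M_{n}'\|_{\infty}\le|m_{i_{1}j_{1}}|(1+o(1))$ through Lemma \ref{lem:keylemma2}\eqref{lemma3.2b} as in Case II of Section \ref{sec4}.

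\textbf{Residual, eigenvector, and iteration.} Writing $M_{n}\mathbf v_{*}=|m_{i_{1}j_{1}}|\mathbf v_{*}+\mathbf r_{1}$, the residual $\mathbf r_{1}$ has components $\tfrac{1}{\sqrt{2}}e^{i\theta_{1}/2}m_{i_{1}i_{1}}$ at index $i_{1}$, $\tfrac{1}{\sqrt{2}}e^{-i\theta_{1}/2}m_{j_{1}j_{1}}$ at index $j_{1}$, and for $k\notin\{i_{1},j_{1}\}$ a bounded linear combination of $m_{ki_{1}}$ and $m_{kj_{1}}$; crucially, the large entries $m_{i_{1}j_{1}}$ and $m_{j_{1}i_{1}}$ do not appear. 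Bounding each remaining column entry by $c_{n}^{\kappa}$ via Lemma \ref{lem:keylemma1} and controlling $\sum_{k}|m_{ki_{1}}|,\sum_{k}|m_{kj_{1}}|=o(c_{n})$ by the column versions of Lemma \ref{lem:keylemma2}, the elementary inequality $\sum|x_{k}|^{2}\le(\max_{k}|x_{k}|)\sum|x_{k}|$ yields $\|\mathbf r_{1}\|_{2}=o_{\mathbb P}(c_{n})$ (since $\kappa<1$). Because the point process $\sum_{i\le j}\delta_{c_{n}^{-1}|m_{ij}|}$ converges to a Poisson process on $(0,\infty)$, consecutive values $|m_{i_{k}j_{k}}|$ exhibit macroscopic gaps on scale $c_{n}$, so Theorem \ref{thm:pertubeigs} simultaneously delivers the eigenvalue matching \eqref{eqn:eigsub} and the claimed localization of $\mathbf v_{1}(M_{n})$ onto $\mathbf v_{*}$. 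For $l\ge 2$, delete the $2(l-1)$ rows and columns with indices in $\{i_{1},j_{1},\ldots,i_{l-1},j_{l-1}\}$; by Lemma \ref{lem:keylemma1} the entry $m_{i_{l}j_{l}}$ survives with probability tending to one, and iterating the $l=1$ argument on the resulting principal minor together with Cauchy interlacing (Theorem \ref{thm:cauchy}\eqref{lemma3.5a}) concludes the proof.

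\textbf{Main obstacle.} The single essentially new ingredient is the Hermitian analogue of Theorem \ref{thm:traceboundcov}: the moment method must now count closed walks on a single vertex set (the classical Wigner path combinatorics with innovations, $T_{3}$- and $T_{4}$-edges) rather than the bipartite return pairs of \cite{Yin:1988kp}. The truncated heavy-tailed moment bounds used in Theorem \ref{thm:traceboundcov} and the slowly varying arithmetic transfer without modification, so the work reduces to reindexing the combinatorial encoding; everything else adapts directly from Section \ref{sec4} by replacing the one-coordinate test vector $\mathbf e_{i_{l}}$ by the two-coordinate vector $\mathbf v_{*}$.
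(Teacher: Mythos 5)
Your overall architecture is exactly the one the paper intends for Theorem \ref{thm:mainresult1}: the paper itself only sketches this proof, deferring to the covariance argument of Section \ref{sec4} and to \cite{BenaychGeorges:2014ki}, and supplies precisely the ingredient you identify as the main obstacle, namely the Hermitian trace bound (Proposition \ref{thm:tracebound}). Your computation of $\langle M_n\mathbf v_*,\mathbf v_*\rangle$, the cancellation of the large entry in the residual at index $j_1$, the row-sum/truncation dichotomy between the two sub-ranges of $\alpha$, and the use of Theorem \ref{thm:pertubeigs} with Poissonian spacing are all correct and consistent with the paper's treatment.

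There is, however, one concrete step that fails as written: the iteration. You delete the $2(l-1)$ indices $\{i_1,j_1,\ldots,i_{l-1},j_{l-1}\}$ and invoke Cauchy interlacing, but interlacing after removing $2(l-1)$ rows and columns only yields $\lambda_{2l-1}(M_n)\le\lambda_1(\text{minor})$, not $\lambda_l(M_n)\le\lambda_1(\text{minor})$. Already for $l=2$ this gives an upper bound on $\lambda_3$ but leaves $\lambda_2$ uncontrolled: interlacing with two removed indices is genuinely compatible with $M_n$ having \emph{two} eigenvalues near $|m_{i_1j_1}|$ while the minor's top eigenvalue is only $\approx|m_{i_2j_2}|$, so no amount of massaging recovers the bound on $\lambda_2$ from this minor. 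The fix is standard and is what the covariance proof does (there, via the singular-value interlacing of Theorem \ref{thm:cauchy}\eqref{lemma3.5b} after deleting a single row): delete only the indices $i_1,\ldots,i_{l-1}$, one per large entry. Removing the row and column indexed by $i_k$ already annihilates both $m_{i_kj_k}$ and $m_{j_ki_k}$, Lemma \ref{lem:keylemma1} guarantees (w.h.p.) that $m_{i_lj_l}$ survives in the resulting principal minor, and Theorem \ref{thm:cauchy}\eqref{lemma3.5a} applied $l-1$ times then gives exactly $\lambda_l(M_n)\le\lambda_1(M_n^{(-i_1,\ldots,-i_{l-1})})=|m_{i_lj_l}|(1+o(1))$. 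With that correction the argument goes through.
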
\vspace{4mm}

\noindent Again, \eqref{eqn:eigsub} implies  that the random point process
$
\hat{\mathcal Q}_{n} =\sum_{i=1}^{n} \delta_{c_{n}^{-1}\lambda_{i}(M_{n})}\mathbf 1_{\lambda_{i}(M_{n})>0}
$
also converges in distribution to a Poisson point process on $(0, + \infty)$ with intensity $\frac{\alpha}{x^{1+\alpha}}$ as $n\to \infty$.

When $\alpha> 2(1+\mu^{-1})$, the analogue of Theorem \ref{thm:mainresult4} is the following.
\begin{thm}\label{thm:mainresult2} Suppose $ \alpha > 2(1+\mu^{-1})$ and  that $x$ has mean zero and variance one. Then for each $l\ge 1$, as $n\to \infty$, we have
\[
\frac{\lambda_{l}(M_{n})}{n^{\mu/2}} \stackrel{P}{\longrightarrow} 2.
\]
The eigenvectors are delocalized: there exist $\beta>0$ such that for each $l\geq 1$, $\eta < 1/2$
\begin{equation*}\label{eq:delocthm}
\Pro\left(\exists l: |\lambda_{l}(M_{n})|\ge \sqrt{2\eta} ||M_{n}|| \text{ and } \mathbf v_{l}(M_{n}) \text{ is } (\lfloor n^{\beta}\rfloor, \eta)\text{-localized}\right) \to 0
\end{equation*}
as $n\to \infty$.
\end{thm}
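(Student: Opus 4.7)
The proof will mirror that of Theorem \ref{thm:mainresult4} step by step, replacing the Yin-Bai-Krishnaiah combinatorics of Theorem \ref{thm:traceboundcov} with the standard F\"uredi-Koml\'os moment method for Wigner-type matrices. First, I would establish the Hermitian analog of Proposition \ref{prop:samplecovariance}: the empirical spectral distribution of $n^{-\mu/2} M_n$ converges almost surely to the semicircle law on $[-2,2]$. The normalization is correct since $\Var(m_{ij}/n^{\mu/2}) = 1/n$, and the classical truncation-plus-moments proof of Wigner's theorem applies verbatim. This yields the lower bound $\Pro(\lambda_l(M_n)/n^{\mu/2} \ge 2 - \epsilon) \to 1$ for every fixed $l \ge 1$ and $\epsilon > 0$.

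For the matching upper bound, choose $\gamma \in (\tfrac{\mu}{2(\alpha-1)}, \tfrac{\mu}{2})$ and $\gamma' = \mu/2$, and split $M_n = \hat M_n + M_n'$ at the truncation level $n^\gamma$. By the triangle inequality, $\|M_n\| \le \|\hat M_n\| + \|M_n'\|$, and since $M_n'$ is Hermitian, $\|M_n'\| \le \|M_n'\|_\infty$. The bound $\|M_n'\|_\infty = o(n^{\mu/2})$ w.e.h.p.\ is obtained by breaking the row sum $\sum_j |m_{ij}|\mathbf 1_{\{|m_{ij}|>n^\gamma\}}$ into the three ranges $(n^\gamma, n^{\mu/\alpha}]$, $(n^{\mu/\alpha}, n^{(\mu+1)/\alpha}]$, $(n^{(\mu+1)/\alpha}, n^{\theta}]$ and invoking Lemma \ref{lem:keylemma2}(b), (c), (d), exactly as in Section \ref{sec:lastpart}; the hypotheses $\gamma > \tfrac{\mu}{2(\alpha-1)}$ and $\alpha > 2(1+\mu^{-1})$ are both critical here. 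For the truncated piece, I would prove a Hermitian version of Theorem \ref{thm:traceboundcov}, namely $\Pro(\|\hat M_n\| \ge \kappa \cdot 2 n^{\mu/2}) \to 0$ for every $\kappa > 1$. The proof is a moment expansion of $\E[\Tr \hat M_n^{2k}\, \mathbf 1_{E_n}]$ with $k = \lfloor n^{\gamma''}\rfloor$ on the event $E_n$ that every row of $M_n$ has at most $C n^\mu$ nonzero entries; edges of each closed walk are classified into innovations, $T_3$- and $T_4$-edges, and truncated heavy-tail moments are controlled by the same estimate $\E|x|^s\mathbf 1_{\{|x|\le n^\gamma\}} \le L_0(n^\gamma)\tfrac{s}{s-\alpha}n^{\gamma(s-\alpha)^+}$ used in the covariance case. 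Only the enumeration of canonical graphs changes, and is replaced by the Wigner count (which is in fact simpler than the bipartite count of Yin-Bai-Krishnaiah).

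For the delocalization statement, I apply Lemma \ref{lem:linearalgebraeasylemma}: if $\mathbf v_l(M_n)$ is $(\lfloor n^\beta\rfloor, \eta)$-localized with $|\lambda_l(M_n)| \ge \sqrt{2\eta}\,\|M_n\|$, then
\[
\rho_{\lfloor n^\beta \rfloor}(M_n) \ge \bigl(\sqrt{2\eta(1-\eta)} - \sqrt{\eta}\bigr)\|M_n\|,
\]
and for $\eta < 1/2$ the right-hand side is of order $n^{\mu/2}$. I then union-bound over the $\binom{n}{\lfloor n^\beta\rfloor} \le n^{n^\beta}$ choices of index set $I$ of size $\lfloor n^\beta\rfloor$, applying the Hermitian trace bound to each principal submatrix $M_I$ to obtain a decay of the form $P(n)\, n^{-\theta\lfloor n^{\gamma'}\rfloor}$ per submatrix. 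Choosing $\beta < \gamma'$ then makes the overall union bound tend to zero.

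The principal obstacle is the Hermitian trace bound. Although the walk-expansion scheme of Theorem \ref{thm:traceboundcov} transfers with only cosmetic changes, one must verify carefully that the heavy-tail blow-up $n^{\gamma(m_t - \alpha)^+}$ per repeated edge is absorbed by the variance factor under the constraints $\gamma < \mu/2$ and $\gamma(\alpha - 2) > 0$. The submatrix version needed for the delocalization step adds a further bookkeeping layer, since when $\lfloor n^\beta\rfloor/n \to 0$ one must keep track of odd and even innovations separately; but once these technical points are handled, the rest of the argument is a direct transcription of Section \ref{sec4}.
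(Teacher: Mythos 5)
Your plan is correct and is essentially the paper's own argument: the paper likewise handles this theorem by transplanting the covariance-case machinery — truncation at $n^{\gamma}$ with $\gamma\in(\tfrac{\mu}{2(\alpha-1)},\tfrac{\mu}{2})$, control of $\|M_n'\|_\infty$ via Lemma \ref{lem:keylemma2}, a Hermitian trace bound on the event that every row has at most $Cn^{\mu}$ nonzero entries (this is exactly Proposition \ref{thm:tracebound}, proved by factoring the walk expectation into the $x$-part and the $y$-part), and Lemma \ref{lem:linearalgebraeasylemma} plus a union bound over principal submatrices for delocalization. The only difference is that the paper outsources the walk combinatorics to \cite{BenaychGeorges:2014ki} rather than redoing the Füredi--Komlós count, and when you work out the Hermitian trace bound you will find the slightly tighter constraint $\tfrac{\mu}{4}+\gamma+\gamma''<\gamma'$, which is still satisfiable since $\alpha>2(1+\mu^{-1})\ge 4$.
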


Under the extra assumptions that the matrix $M_{n}$ has, asymptotically, a fixed number $n^{\mu}$ nonzero entries in almost all rows and no randomness in their positions, 
Theorems \ref{thm:mainresult1} and \ref{thm:mainresult2} were proved in \cite{BenaychGeorges:2014ki}. It is not difficult to modify the arguments there to include the above results. For instance, Lemmas \ref{lem:keylemma1} and \ref{lem:keylemma2} still hold if we replace $M_{p,n}$ by the Hermitian matrix $M_{n}$ and $c_{np}$ by $c_{n}$, and this modification is well suited for deriving an upper bound for the infinity norms. The next proposition is a small modification of \cite[Theorem 2.1]{BenaychGeorges:2014ki}, which allows us to deal with the fact that here the number of nonzero elements in a row is random and not bounded by $n^\mu$.
\begin{prop}  
\label{thm:tracebound}  Suppose $\alpha > 2$ and $x$ has mean zero and variance one. Let $M_{n}=X_{n}\cdot Y_{n}$ be the sparse $n \times n$ Hermitian matrix with heavy tailed entries. Consider positive exponents $\gamma, \gamma'$ and $\gamma''$ such that $\frac{\mu}{2} \le \gamma'$ and $\frac{\mu}{4}+\gamma + \gamma'' < \gamma'$. Define the truncated matrix 
\[
\hat M_{n}  = [\hat m_{ij}]_{i, j=1}^{n}= [m_{ij}\mathbf 1_{\{|m_{ij}| \le n^{\gamma}\}}]_{i,j=1}^{n}.
\]
We also assume that the truncated entries are centered. Then for $s_{n}\le n^{\gamma''}$, there exists a slowly varying function $L_{0}$ such that for any $C > 0$
\[
\E [\Tr(\hat M_{n}^{2s_{n}})\mathbf 1_{\{\mathsf  L \le Cn^{\mu}\} }] \le L_{0}(n)n^{1+2\gamma} s_{n}^{-3/2}(2n^{\gamma'})^{2s_{n}},
\]
where $\mathsf L \mathdef \max_{1\le i \le n}\sum_{j=1}^{n}\mathbf 1_{\{|m_{ij}|> 0\}} = \max_{1\le i\le n}\sum_{j=1}y_{ij}$, i.e., the maximum of the number of nonzero entries of all rows.
\end{prop}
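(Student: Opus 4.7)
The plan is to adapt the combinatorial trace-method argument from the proof of Theorem \ref{thm:traceboundcov}, translating it from the rectangular sample-covariance setting to the Hermitian one and incorporating the indicator $\mathbf 1_{\{\mathsf L \le C n^\mu\}}$ in place of the deterministic row-sparsity assumption of \cite[Theorem 2.1]{BenaychGeorges:2014ki}. First I would expand
\[
\E [\Tr(\hat M_{n}^{2s_{n}})\mathbf 1_{\{\mathsf L \le Cn^{\mu}\}}] = \sum_{i_1,\ldots,i_{2s_n}=1}^{n} \E\!\left[\hat m_{i_1 i_2}\hat m_{i_2 i_3}\cdots \hat m_{i_{2s_n} i_1}\mathbf 1_{\{\mathsf L \le Cn^\mu\}}\right]
\]
and, writing $\hat m_{ij}=\hat x_{ij} y_{ij}$ and using the independence of $X_n$ and $Y_n$, separate each summand into a product of an $\hat x$-expectation and a $y$-expectation (which still carries the indicator).

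Next I would associate every multi-index $\mathbf i$ with its undirected closed walk graph $G_{\mathbf i}$ on $\{1,\ldots,n\}$ and classify its edges into innovations, $T_3$-edges, and $T_4$-edges exactly as in the proof of Theorem \ref{thm:traceboundcov}, but without the row/column split since $M_n$ is Hermitian. For the $\hat x$-expectation I would apply the truncated-moment bound $\E|\hat x_{11}|^s \le L_0(n^\gamma)\frac{s}{s-\alpha}\, n^{\gamma(s-\alpha)}$ for $s>\alpha$ and $\E|\hat x_{11}|^s \le L_0(n^\gamma)$ for $s\le\alpha$, using $\alpha>2$ and centering, to obtain, for a walk with $l$ innovations, $q$ distinct $T_4$-edges, and $b$ of them coinciding with an innovation,
\[
|\E(\hat x_{i_1 i_2}\cdots \hat x_{i_{2s_n} i_1})| \le L_0(n^\gamma) (2 C_\alpha s_n)^q\, n^{\gamma[(2s_n-2l)-2(q-b)]}.
\]
The indicator $\mathbf 1_{\{\mathsf L \le Cn^\mu\}}$ plays the role of the event $E_n$ from Theorem \ref{thm:traceboundcov}: conditionally on this event, each innovation from an already-visited vertex leads to a new vertex chosen among at most $Cn^\mu$ possibilities, so the count of isomorphic graphs contributing to $\sum'''$ is bounded by $n(Cn^\mu)^l$.

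Combining this with the bounds $\sum' \le \binom{s_n}{l}\binom{2s_n-l}{l}$ for arrangements and $\sum'' \le s_n^{2q}(q+1)^{6s_n-6l}$ for canonical graphs from \cite[p.~519]{Yin:1988kp}, summing over $b$ and $q$ via the elementary inequality $(q+1)^z\le w^{q+1}(z/\log w)^z$ (with $w=2$, $z=6s_n-6l$), and finally invoking the Catalan-number identity $\sum_{l=0}^{s_n}\binom{s_n}{l}\binom{2s_n-l}{l}x^l \le C s_n^{-3/2}(1+\sqrt{x})^{2s_n}$ with $x=Cn^\mu$, one arrives at the claimed bound $L_0(n) n^{1+2\gamma} s_n^{-3/2} (2 n^{\gamma'})^{2s_n}$ after using $\mu/2 \le \gamma'$ to produce the $2n^{\gamma'}$ factor. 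The hypothesis $\mu/4 + \gamma + \gamma'' < \gamma'$ with $s_n \le n^{\gamma''}$ is precisely what is needed to absorb the error factor $\bigl(s_n^2 (n^\gamma L(n^\gamma))^{1/3}\bigr)^{6s_n-6l}$ into $(n^{\gamma'})^{2s_n-2l}$, while the $n^{1+2\gamma}$ prefactor comes from the first innovation (one free endpoint) together with the $T_4$-edge coincidence terms tracked by $b$.

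The main obstacle will be replacing the pointwise row-sparsity assumption of \cite[Theorem 2.1]{BenaychGeorges:2014ki} by the random event $\{\mathsf L \le Cn^\mu\}$: since this event couples rows through the maximum, one must carefully argue that each innovation step still contributes at most $Cn^\mu$ admissible new vertices, which requires conditioning on $Y_n$ and treating the $y$-expectation in a row-by-row fashion. A secondary delicate point is extracting the sharp $s_n^{-3/2}$ Catalan prefactor (rather than a coarser exponential bound as sufficed in Theorem \ref{thm:traceboundcov}), which is essential for later applications where $s_n$ grows polynomially in $n$.
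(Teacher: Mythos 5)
Your proposal follows essentially the same route as the paper: factorize the expectation via the independence of $X_n$ and $Y_n$, observe that the indicator $\mathbf 1_{\{\mathsf L \le Cn^\mu\}}$ caps the number of admissible new vertices at each innovation by $Cn^\mu$ (with the constant $C$ harmless in the final bound), and then run the standard trace-method combinatorics — the paper simply outsources that last step to \cite[Theorem 2.1]{BenaychGeorges:2014ki} rather than re-deriving it from the covariance-case argument as you sketch. Your resolution of the "coupling through the maximum" worry is exactly the paper's implicit one, so there is no gap.
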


\begin{proof} First, note that it is equivalent to truncate the $X_{n}$ matrix, i.e.,
\[
\hat M_{n} = [\hat m_{ij}]_{i,j=1}^{n} = [x_{ij}y_{ij}\mathbf 1_{\{|x_{ij}y_{ij}|\le n^{\gamma}\}}]_{i,j=1}^{n} = [x_{ij}\mathbf 1_{\{|x_{ij}|\le n^{\gamma}\}}\cdot  y_{ij}]_{i,j=1}^{n} \mathdef [\hat x_{ij} y_{ij}]_{i,j=1}^{n}.
\]

By independence, we write the expected trace on the event $\{\mathsf L \le Cn^{\mu}\}$
\begin{align*}
\notag
\E [\Tr(\hat M_{n}^{2s_{n}})\mathbf 1_{\{ L \le Cn^{\mu} \} }] & = \sum_{i_{1},\ldots, i_{2s_{n}}=1}^{n} \E (\hat m_{i_{1}i_{2}}\cdots \hat m_{i_{2s_{n}}i_{1}}\mathbf 1_{\{ \mathsf L \le Cn^{\mu} \} } )\\
&
=\sum_{i_{1},\ldots, i_{2s_{n}}=1}^{n} \E (\hat x_{i_{1}i_{2}}\cdots \hat x_{i_{2s_{n}}i_{1}} )\E( y_{i_{1}i_{2}}\cdots y_{i_{2s_{n}}i_{1}} \mathbf 1_{\{\mathsf L \le Cn^{\mu} \} } ).
\end{align*}
Once this factorization is written, the proof follows immediately from the same combinatorics presented in the proof of \cite[Theorem 2.1]{BenaychGeorges:2014ki}. Note $ y_{i_{1}i_{2}}\cdots y_{i_{2s_{n}}i_{1}} \mathbf 1_{\{\mathsf L \le Cn^{\mu} \} }$ is nonzero only if all rows have no more than $Cn^{\mu}$ nonzero entries. When labeling the vertices in a path $\mathbf i = (i_{1}, \ldots, i_{2s_{n}})$, we have $Cn^{\mu}$ possible choices for each vertex instead of $n^{\mu}$ in \cite[Theorem 2.1]{BenaychGeorges:2014ki}. However, the extra factor $C$ will not play a role in determining the upper bound. We omit the lengthy calculation here.
\end{proof}
%

\bibliographystyle{plain}

\begin{thebibliography}{10}

\bibitem{Anderson:2010up}
Greg~W. Anderson, Alice Guionnet, and Ofer Zeitouni, \emph{{An Introduction to
  Random Matrices}}, Cambridge University Press, 2010.

\bibitem{Auffinger:2009vs}
Antonio Auffinger, G\' erard Ben~Arous, and Sandrine P{\'e}ch{\'e}, \emph{{Poisson
  convergence for the largest eigenvalues of Heavy Tailed Random Matrices}},
  Annales de lInstitut Henri Poincare \textbf{45} (2009), 589--610.

\bibitem{Bai:2009kn}
Zhidong Bai and Jack~W. Silverstein, \emph{{Spectral Analysis of Large Dimensional Random Matrices}}, Springer Science {\&} Business Media, 2009.

\bibitem{Bai1988166}
Zhidong Bai, Jack~W. Silverstein, and Y.~Q. Yin, \emph{A note on the largest
  eigenvalue of a large dimensional sample covariance matrix}, Journal of
  Multivariate Analysis \textbf{26} (1988), no.~2, 166--168.

\bibitem{Bai:1988je}
Zhidong Bai and Y.~Q. Yin, \emph{{Necessary and sufficient conditions for almost
  sure convergence of the largest eigenvalue of a Wigner matrix}}, The Annals
  of Probability (1988), 1729--1741.

\bibitem{BenaychGeorges:2014gg}
Florent Benaych-Georges and Alice Guionnet, \emph{{Central limit theorem for
  eigenvectors of heavy tailed matrices}}, Electronic Journal of Probability
  \textbf{19} (2014), 1--27.

\bibitem{BenaychGeorges:2014ki}
Florent Benaych-Georges and Sandrine P{\'e}ch{\'e}, \emph{{Localization and
  delocalization for heavy tailed band matrices}}, Annales de l'Institut Henri
  Poincar{\'e}, Probabilit{\'e}s et Statistiques \textbf{50} (2014), no.~4,
  1385--1403.

\bibitem{ECP3027}
Florent Benaych-Georges and Sandrine P\'ech\'e, \emph{Largest eigenvalues and
  eigenvectors of band or sparse random matrices}, Electron. Commun. Probab.
  \textbf{19} (2014), no. 4, 1--9.

\bibitem{Bennett:1962ev}
George Bennett, \emph{{Probability Inequalities for the Sum of Independent
  Random Variables}}, Journal of the American Statistical Association
  \textbf{57} (1962), no.~297, 33--45.

\bibitem{Regular}
Nicholas H. Bingham, Charles M. Goldie and Jozef. L. Teugels,  \emph{Regular variation}, Cambridge University Press, 1987.

\bibitem{Bhatia:1997tf}
Rajendra Bhatia, \emph{{Matrix analysis}}, Graduate texts in mathematics.
  Springer, 1997.

\bibitem{Bourgade:2013wd}
Paul Bourgade and Horng-Tzer Yau, \emph{{The Eigenvector Moment Flow and local
  Quantum Unique Ergodicty}}, arXiv:1312.1301 (2013).

\bibitem{CB94}
Pierre Cizeau and Jean-Philippe Bouchaud, \emph{Theory of L\'evy matrices}, Phys. Rev. E
  \textbf{50} (1994), 1810--1822.

\bibitem{Erdos:2012cx}
L{\'a}szl{\'o} Erd{\H o}s, Antti Knowles, Horng-Tzer Yau, and Jun Yin,
  \emph{{Spectral statistics of Erd{\H o}s-R\'enyi Graphs II: Eigenvalue
  spacing and the extreme eigenvalues}}, Communications in Mathematical Physics
  \textbf{314} (2012), no.~3, 587--640.

\bibitem{Feller:1971ve}
William Feller, \emph{{An introduction to probability theory and its
  applications, Vol. II}}, 1971.

\bibitem{Khor08}
Oleksiy Khorunzhiy, \emph{Estimates for moments of random matrices with
  gaussian elements}, Lecture Notes in Mathematics \textbf{1934} (2008), 51--92
  (English).

\bibitem{lee2014}
Ji~Oon Lee and Jun Yin, \emph{A necessary and sufficient condition for edge
  universality of wigner matrices}, Duke Math. J. \textbf{163} (2014), no.~1,
  117--173.

\bibitem{540625382}
Alexander Mirlin and Yan~V. Fyodorov, \emph{On the density of states of sparse
  random matrices}, J. Phys. A: Math. Gen. \textbf{24} (1991), no.~10, 2219.

\bibitem{Nagao13}
Taro Nagao, \emph{Spectral density of complex networks with two species of
  nodes}, Journal of Physics A: Mathematical and Theoretical \textbf{46}
  (2013), no.~6, 065003.

\bibitem{1751-8121-40-19-003}
Taro Nagao and Toshiyuki Tanaka, \emph{Spectral density of sparse sample
  covariance matrices}, Journal of Physics A: Mathematical and Theoretical
  \textbf{40} (2007), no.~19, 4973.

\bibitem{Peche}
Sandrine P\'ech\'e, \emph{Universality results for the largest eigenvalues of some sample covariance matrix ensembles},
Probab. Theory Relat. Fields \textbf{143}, (2009) 481--516.


\bibitem{Resnick:2007uq}
Sidney~I. Resnick, \emph{{Extreme Values, Regular Variation, and Point
  Processes}}, Springer Science {\&} Business Media, November 2007.

\bibitem{540625391}
Geoff Rodgers and Cyrano De~Dominicis, \emph{Density of states of sparse random
  matrices}, J. Phys. A: Math. Gen. \textbf{23} (1990), no.~9, 1567.

\bibitem{SemerC02}
Guilhem Semerjian and Leticia~F Cugliandolo, \emph{Sparse random matrices: the
  eigenvalue spectrum revisited}, Journal of Physics A: Mathematical and
  General \textbf{35} (2002), no.~23, 4837.

\bibitem{Sodin}
Sasha Sodin, \emph{The spectral edge of some random band matrices}, Ann. of
  Math (2) \textbf{172} (2010), 2223--2251 (English).

\bibitem{Soshnikov:2004uc}
Alexander Soshnikov, \emph{{Poisson statistics for the largest eigenvalues of
  Wigner random matrices with heavy tails}}, Electronic Communications in
  Probability \textbf{9} (2004), 82--91.

\bibitem{Tao:2012ix}
Terence Tao and Van Vu, \emph{{Random covariance matrices: universality of
  local statistics of eigenvalues}}, The Annals of Probability \textbf{40}
  (2012), no.~3, 1285--1315.

\bibitem{Yin:1988kp}
Y.~Q. Yin, Zhidong Bai, and Pathak R. Krishnaiah, \emph{{On the limit of the largest
  eigenvalue of the large-dimensional sample covariance matrix}}, Probability
  Theory and Related Fields \textbf{78} (1988), no.~4, 509--521.

\bibitem{Yoshida}
Mika Yoshida and Toshiyuki Tanaka, \emph{{Analysis of Sparsely-Spread CDMA via Statistical
  Mechanics}}, Proc. 2006 IEEE Int. Symp. Info. Theory (2006), 2278--2382.

\end{thebibliography}

\end{document}